\newtheorem{thm}{Theorem}
\newtheorem{defn}{Definition}
\newtheorem{lemma}{Lemma}
\newtheorem{pro}{Proposition}
\newtheorem{rk}{Remark}
\numberwithin{equation}{section} \setcounter{tocdepth}{1}
\begin{document}

\vspace{0.5in}
\renewcommand{\bf}{\bfseries}
\renewcommand{\sc}{\scshape}
\vspace{0.5in}

\title[Stability, Bifurcation, and Chaos Control]{Stability, Bifurcation, and Chaos Control in a Discrete-Time Phytoplankton-Zooplankton Model with Holling Type II and Type III Functional Responses}

\author{Sobirjon Shoyimardonov}

\address{S.K. Shoyimardonov$^{a,b}$ \begin{itemize}
\item[$^a$] V.I.Romanovskiy Institute of Mathematics, 9, University str.,
Tashkent, 100174, Uzbekistan;
\item[$^b$] National University of Uzbekistan,  4, University str., 100174, Tashkent, Uzbekistan.
\end{itemize}}
\email{shoyimardonov@inbox.ru}





\keywords{Neimark-Sacker bifurcation, Holling type II, Holling type III, Phytoplankton, Zooplankton, Bifurcation diagram, Chaos control}

\subjclass[2010]{92D25, 37G15, 39A30}

\begin{abstract} In this paper, we investigate the dynamics of a discrete-time phytoplankton-zooplankton model where the predator functional response and toxin distribution functions follow both Holling Type II and Holling Type III forms simultaneously. We analyze the types of fixed points and the global stability of the system. Additionally, we prove the occurrence of a Neimark-Sacker bifurcation at the positive fixed point. The theoretical findings are validated through numerical simulations.
\end{abstract}

\maketitle

\section{Introduction}
In the ecosystem, studying the relationship between prey and predator and investigating their populations remains one of the pressing issues to this day. The earliest predator-prey models were developed in the early 20th century by Alfred Lotka and Vito Volterra. These models indicate that the relationship between predator and prey exhibits complex behavior. Among the most widely studied classes of predator-prey models are those featuring Holling type II and type III functional responses, which model predator consumption rates and their effect on prey populations. In particular, ocean ecosystems can also be considered as predator-prey models.

The study of ocean ecosystems, especially those involving plankton populations, is critical for understanding global ecological processes. In recent years, numerous studies have investigated the dynamics of these systems~\cite{Chen, Hen, Hong, Mac, Qiu, RSH, RSHV, Sajan, Shang, Tian}. Phytoplankton, as primary producers, contribute to carbon fixation through photosynthesis, while zooplankton, as primary consumers, play a vital role in marine food webs. The dynamics between these two groups are closely intertwined, with changes in one often leading to corresponding fluctuations in the other. Predator-prey models provide a useful framework to study their interactions, accounting for growth, predation, mortality, and environmental factors such as nutrient availability. To analyze such interactions, bifurcation theory serves as an essential mathematical tool.

Bifurcation theory is crucial for understanding how the qualitative behavior of a dynamical system changes as parameters are varied. Its applications span many scientific disciplines.  In physics, it is essential for analyzing critical phenomena like phase transitions, fluid instabilities, and the onset of turbulence in nonlinear systems. In chemistry, bifurcation theory is used to study complex reactions, such as the Belousov-Zhabotinsky reaction, offering insights into oscillatory and chaotic behavior driven by autocatalytic feedback. In economics and finance, bifurcations help model the emergence of complex phenomena like market crashes, business cycles, and macroeconomic tipping points.  In ecology, bifurcation analysis reveals how small changes in parameters can lead to significant shifts in predator-prey dynamics. Bifurcations occur in both continuous-time systems, typically described by differential equations, and discrete-time systems, which are represented by maps.

Continuous-time models have long served as a foundation in mathematical biology, offering insights into the instantaneous and continuous evolution of populations. However, in many ecological scenarios, processes and data collection occur at discrete intervals, such as seasonal breeding or periodic sampling. In these cases, discrete-time models provide a more natural and practical approach. They capture changes from one time step to the next and are especially amenable to computational analysis.

Discrete-time models often reveal dynamic behaviors that are less evident in continuous models. A notable example is the Neimark-Sacker bifurcation, the discrete analogue of the Hopf bifurcation. It occurs when a pair of complex-conjugate eigenvalues of the Jacobian crosses the unit circle, destabilizing a fixed point and leading to a closed invariant curve. The resulting quasi-periodic oscillations cause the system to orbit around the equilibrium, generating persistent fluctuations rather than convergence.

 Numerous studies have demonstrated bifurcations in predator-prey models~\cite{Cah, Chen3, Cheng, Ko, Li, Ma, Peng, Wang, Zhou}. For example, Cahit and Yasin~\cite{Cah} identified a Neimark-Sacker bifurcation in a discrete-time model with prey immigration. Li et al.~\cite{Li} observed Neimark-Sacker, period-doubling, and strong resonance bifurcations in a two-dimensional model. Chen et al.~\cite{Chen3} studied fold and flip bifurcations under the influence of the Allee effect.

Such bifurcations have significant ecological implications. In phytoplankton-zooplankton systems, Neimark-Sacker bifurcations may explain bloom-bust cycles, where rapid phytoplankton growth is followed by sharp declines due to grazing or toxin-mediated suppression of zooplankton. Many phytoplankton species produce toxic or allelopathic compounds that reduce zooplankton feeding efficiency and survival. Incorporating toxin production into predator-prey models adds nonlinear inhibitory effects, increasing the potential for oscillatory and chaotic behavior.

For instance, Edwards and Brindley~\cite{Ed} examined an NPZ (nutrient-phytoplankton-zooplankton) model and observed oscillatory behavior. In~\cite{Chatt2}, Chattopadhyay et al.\ proposed a mechanism to explain cyclic bloom dynamics using toxin liberation modeled by a fractional (Holling type II) function, while assuming a linear predator response. They analyzed the occurrence of Hopf bifurcation and suggested that toxins may play a key role in bloom termination. Later, in~\cite{Chatt}, the model was generalized as follows:

\begin{equation}\label{chat}
\left\{\begin{aligned}
&\frac{dP}{dt}=bP\left(1-\frac{P}{K}\right)-\alpha f(P)Z,\\
&\frac{dZ}{dt}=\beta f(P)Z - rZ - \theta g(P)Z,
\end{aligned}\right.
\end{equation}
where $P$ and $Z$ denote the densities of phytoplankton and zooplankton, respectively. The parameters $\alpha, \beta > 0$ represent predation and conversion rates; $b, K > 0$ are the intrinsic growth rate and carrying capacity of phytoplankton; $r > 0$ is the zooplankton mortality rate. The function $f(P)$ is the predator's functional response, and $g(P)$ describes toxin distribution. The parameter $\theta > 0$ denotes the rate of toxin liberation by phytoplankton.

In~\cite{Chatt}, nine biologically meaningful combinations of $f(P)$ and $g(P)$ were studied. The authors showed that toxin release can suppress blooms. A widely used functional form is
\[
\frac{P^h}{1 + cP^h},
\]
which corresponds to Holling type II when $h = 1$ and Holling type III when $h = 2$, unifying multiple response types within a single framework.

In~\cite{Chen}, model~\eqref{chat} was analyzed with $f(P)$ as either Holling type II or III, and a linear toxin function $g(P) = P$. In~\cite{SH}, a Neimark-Sacker bifurcation was established for the discrete-time version of this model with $f(P)$ of Holling type II and linear toxin liberation.

In this work, we consider $f(P) = g(P) = \frac{P^h}{1 + cP^h}$ (for $h = 1, 2$), capturing both Holling type II and III responses. While real systems often exhibit one dominant response type, including both allows comparative analysis of their ecological implications within a unified framework.

We non-dimensionalize model~\eqref{chat} by introducing
\[
\overline{t}=bt, \quad \overline{u}=\frac{P}{K}, \quad \overline{v}=\frac{\alpha K^{h-1}Z}{b}, \quad \overline{c}=cK^h, \quad \overline{\beta}=\frac{\beta K^h}{b}, \quad \overline{r}=\frac{r}{b}, \quad \overline{\theta}=\frac{\theta K^h}{b},
\]
and dropping the overlines yields the simplified system:
\begin{equation}\label{chenn}
\left\{\begin{aligned}
&\frac{du}{dt} = u(1 - u) - \frac{u^h v}{1 + cu^h}, \\
&\frac{dv}{dt} = \frac{\beta u^h v}{1 + cu^h} - rv - \frac{\theta u^h v}{1 + cu^h}.
\end{aligned}\right.
\end{equation}

Applying the Euler discretization method with time step $\Delta t = 1$ (e.g., one day), appropriate for plankton ecology, we obtain the discrete-time system:
\begin{equation}\label{h12}
\begin{cases}
u^{(1)} = u(2 - u) - \dfrac{u^h v}{1 + cu^h}, \\[2mm]
v^{(1)} = \dfrac{\gamma u^h v}{1 + cu^h} + (1 - r)v,
\end{cases}
\end{equation}
where $\gamma = \beta - \theta$. We assume $r, c > 0$ and $\gamma \in \mathbb{R}$.

By varying $\gamma$, we identify a Neimark-Sacker bifurcation at a positive fixed point. This bifurcation marks a transition from stability to oscillatory dynamics, reflecting the ecological shift from steady-state to periodic interactions. It reveals how changes in zooplankton efficiency ($\beta$) or phytoplankton toxicity ($\theta$) can lead to dynamic shifts in population behavior. Thus, $\gamma$ serves as a critical bifurcation parameter.

In this study, we investigate the dynamics of model~\eqref{h12}. Section~2 analyzes the fixed points. Section~3 demonstrates the occurrence of a Neimark-Sacker bifurcation and discusses chaos control. Section~4 explores global stability, and Section~5 verifies the results through numerical simulations. Finally, Section~6 presents concluding remarks.

\section{stability of fixed points}

It is obvious that, system (\ref{h12}) always has two nonnegative equilibria $E_0=(0; 0)$ and $E_1=(1; 0)$. From $v^{(1)}=v$ we get that $\overline{E}=(\overline{u}; \overline{v})$ is a positive fixed point  of system (\ref{h12}) if and only if  $\gamma>r(1+c),$ where:

\begin{equation}\label{pos}
\overline{u}^{h}=\frac{r}{\gamma-rc}, \ \ \overline{v}=\frac{(1-\overline{u})(1+c\overline{u}^{h})}{\overline{u}^{h-1}}.
\end{equation}

\begin{defn}\label{def1}
Let $E(x,y)$ be a fixed point of the operator $F:\mathbb{R}^{2}\rightarrow\mathbb{R}^{2}$, and let $\lambda_1, \lambda_2$ be the eigenvalues of the Jacobian matrix $J = J_{F}$ at the point $E(x,y)$. The classification of the fixed point is as follows:

\begin{itemize}
    \item[(i)] If $|\lambda_1|<1$ and $|\lambda_2|<1$, then $E(x,y)$ is called an attractive.
    \item[(ii)] If $|\lambda_1|>1$ and $|\lambda_2|>1$, then $E(x,y)$ is called a repelling.
    \item[(iii)] If $|\lambda_1|<1$ and $|\lambda_2|>1$ (or $|\lambda_1|>1$ and $|\lambda_2|<1$), then $E(x,y)$ is called a saddle.
    \item[(iv)] If either $|\lambda_1|=1$ or $|\lambda_2|=1$, then $E(x,y)$ is called non-hyperbolic.
\end{itemize}
\end{defn}

Before analyzing the fixed points we give the following useful lemma.

\begin{lemma}[Lemma 2.1, \cite{Cheng}]\label{lem1}
Let \( F(\lambda) = \lambda^2 + B\lambda + C \), where \( B \) and \( C \) are two real constants. Suppose \( \lambda_1 \) and \( \lambda_2 \) are the roots of \( F(\lambda) = 0 \). If \( F(1) > 0 \), then the following statements hold:
     \begin{itemize}
        \item[(i.1)] \( |\lambda_1| < 1 \) and \( |\lambda_2| < 1 \) if and only if \( F(-1) > 0 \) and \( C < 1 \).
        \item[(i.2)] \( \lambda_1 = -1 \) and \( \lambda_2 \neq -1 \) if and only if \( F(-1) = 0 \) and \( B \neq 2 \).
        \item[(i.3)] \( |\lambda_1| < 1 \) and \( |\lambda_2| > 1 \) if and only if \( F(-1) < 0 \).
        \item[(i.4)] \( |\lambda_1| > 1 \) and \( |\lambda_2| > 1 \) if and only if \( F(-1) > 0 \) and \( C > 1 \).
        \item[(i.5)] \( \lambda_1 \) and \( \lambda_2 \) are a pair of conjugate complex roots with \( |\lambda_1| = |\lambda_2| = 1 \) if and only if \( -2 < B < 2 \) and \( C = 1 \).
        \item[(i.6)] \( \lambda_1 = \lambda_2 = -1 \) if and only if \( F(-1) = 0 \) and \( B = 2 \).
    \end{itemize}

 \end{lemma}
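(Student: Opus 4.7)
The plan is to exploit Vieta's formulas $\lambda_{1}+\lambda_{2}=-B$, $\lambda_{1}\lambda_{2}=C$ together with the factorizations $F(1)=(1-\lambda_{1})(1-\lambda_{2})$ and $F(-1)=(1+\lambda_{1})(1+\lambda_{2})$. The standing hypothesis $F(1)>0$ rules out $\lambda_{i}=1$ and, in the real-root case, forces the two roots to lie on the same side of $1$. I would then split the argument according to the sign of the discriminant $\Delta=B^{2}-4C$.

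If $\Delta<0$, the roots form a conjugate pair, so $|\lambda_{1}|^{2}=|\lambda_{2}|^{2}=\lambda_{1}\lambda_{2}=C$ (in particular $C>0$), and $F(-1)=|1+\lambda_{1}|^{2}>0$ holds automatically because the non-real $\lambda_{1}$ cannot equal $-1$. Hence in this regime the three alternatives $|\lambda_{i}|<1$, $=1$, $>1$ correspond exactly to $C<1$, $C=1$, $C>1$. Combining this with $\Delta<0 \iff B^{2}<4C$ yields (i.5) at once (when $C=1$ the condition $B^{2}<4$ reads $-2<B<2$) and covers the complex-root parts of (i.1) and (i.4).

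If $\Delta\ge 0$, both $\lambda_{i}$ are real and everything reduces to a sign-chase for $F$ at $\pm 1$. For (i.1), $|\lambda_{i}|<1$ immediately yields $F(-1)>0$ and $C=\lambda_{1}\lambda_{2}<1$; for the converse, $F(1)>0$ together with $F(-1)>0$ forces both $\lambda_{i}$ to lie strictly inside or strictly outside $(-1,1)$, and the bound $C<1$ excludes the ``both outside'' case (which then merges seamlessly with the complex case). Clauses (i.3) and (i.4) come out by the same accounting: $F(-1)<0$ encodes exactly one root outside the unit interval, while $F(-1)>0$ with $C>1$ identifies the configuration with both roots outside. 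Finally, for (i.2) and (i.6) the equation $F(-1)=0$ means $-1$ is a root; Vieta then gives the other root as $-C$ and $B=1+C$, so $B\ne 2$ versus $B=2$ distinguishes the simple root from the double root at $-1$.

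The main obstacle is purely bookkeeping: ensuring that each ``if and only if'' is a genuine equivalence rather than a one-way implication. The subtle converses are those of (i.1) and (i.4), where one must rule out the mixed configuration ``one root inside, one outside'' (handled by the sign of $F(-1)$) and the ``both roots outside with opposite signs'' configuration (handled by comparing $C$ with $1$). Once the complex-root case is dispatched via $|\lambda|^{2}=C$ and the unit-interval counting on the real line is carried out carefully, all six clauses drop out together without any further computation.
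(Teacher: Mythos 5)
The paper does not prove this lemma at all: it is imported verbatim as Lemma~2.1 of the cited reference [Cheng] and used as a black box, so there is no in-paper argument to compare against. Your sketch is a correct and standard derivation (essentially the Jury/Schur--Cohn conditions for a real quadratic): the factorizations $F(1)=(1-\lambda_1)(1-\lambda_2)$ and $F(-1)=(1+\lambda_1)(1+\lambda_2)$, the split on the discriminant with $|\lambda_1|^2=\lambda_1\lambda_2=C$ in the complex case, and the interval counting for real roots do yield all six clauses, and the treatment of (i.2)/(i.6) via the second root $-C$ and $B=1+C$ is exactly right. One small misstatement in your closing paragraph: the configuration ``both real roots outside the unit interval with opposite signs'' ($\lambda_1<-1<1<\lambda_2$) is \emph{not} excluded by comparing $C$ with $1$ (there $C<-1<1$, so it would survive the test in (i.1)); it is excluded by the standing hypothesis $F(1)>0$, which forces both real roots onto the same side of $1$ --- a fact you correctly state earlier in the proposal, so the argument itself is not damaged, only the attribution in the summary. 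With that bookkeeping corrected, the proof is complete.
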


The following theorem classifies all fixed points of the operator (\ref{h12}).

\begin{thm} \label{thm1}
For the fixed points of \eqref{h12}, the following statements hold:

\begin{itemize}
    \item[(i)] The stability of $E_0$ is given by:
    \[
    E_0 = \begin{cases}
        \text{nonhyperbolic}, & \text{if } r = 2, \\
        \text{saddle}, & \text{if } 0 < r < 2, \\
        \text{repelling}, & \text{if } r > 2.
    \end{cases}
    \]

    \item[(ii)] The stability of $E_1$ is determined as follows:
    \[
    E_1 = \begin{cases}
        \text{nonhyperbolic}, & \text{if } \gamma = (r-2)(1+c) \text{ or } \gamma = r(1+c), \\
        \text{attractive}, & \text{if } (r-2)(1+c) < \gamma < r(1+c), \\
        \text{saddle}, & \text{otherwise}.
    \end{cases}
    \]

    \item[(iii)] The stability of $\overline{E}$ depends on the function $q(\overline{u})$:
    \[
    \overline{E} = \begin{cases}
        \text{nonhyperbolic}, & \text{if } q(\overline{u}) = 1, \\
        \text{attractive}, & \text{if } q(\overline{u}) < 1, \\
        \text{reppelling}, & \text{if } q(\overline{u}) > 1.
    \end{cases}
    \]
    where $\overline{u}$ is defined as in (\ref{pos}) and
    \[
    q(\overline{u}) = \frac{(1 - \overline{u})(2 - h + rh + 2c\overline{u}^h)}{1 + c\overline{u}^h}.
    \]
\end{itemize}

\end{thm}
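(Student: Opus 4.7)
The plan is to compute the Jacobian $J$ of the map at each equilibrium and classify the eigenvalues via Definition~\ref{def1}. For $E_0$ and $E_1$ the Jacobian is triangular, so the eigenvalues are explicit; for $\overline{E}$ they are not, and I will route the classification through Lemma~\ref{lem1}.

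At $E_0=(0,0)$ every mixed partial carries a factor of $u$ or $v$, so $J(0,0)=\mathrm{diag}(2,\,1-r)$. Since one eigenvalue equals $2$, $E_0$ can never be attractive, and part~(i) follows immediately from inspecting $|1-r|$ against $1$. At $E_1=(1,0)$, the derivative $\partial_u[u(2-u)]$ vanishes at $u=1$ and $\partial v^{(1)}/\partial u=0$ there as well, giving an upper-triangular Jacobian with eigenvalues $0$ and $\gamma/(1+c)+1-r$. Thus $E_1$ is attractive precisely when $|\gamma/(1+c)+1-r|<1$, which rearranges to $(r-2)(1+c)<\gamma<r(1+c)$; the remaining branches in (ii) come from the boundary and reversed inequalities.

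For $\overline{E}$, the identity $\gamma\overline{u}^h/(1+c\overline{u}^h)=r$ (equivalent to $v^{(1)}=v$) collapses $J_{22}$ to $1$, and substituting $\overline{v}$ from (\ref{pos}) into $J_{11}$ and $J_{21}$ yields
\[
J(\overline{E})=\begin{pmatrix} 2-2\overline{u}-\dfrac{h(1-\overline{u})}{1+c\overline{u}^h} & -\dfrac{\overline{u}^h}{1+c\overline{u}^h}\\[2mm] \dfrac{rh(1-\overline{u})}{\overline{u}^h} & 1 \end{pmatrix}.
\]
Writing its characteristic polynomial as $F(\lambda)=\lambda^2+B\lambda+C$ in the notation of Lemma~\ref{lem1}, a short calculation gives
\[
F(1)=\frac{rh(1-\overline{u})}{1+c\overline{u}^h},\qquad C=\det J(\overline{E})=\frac{(1-\overline{u})\bigl(2-h+rh+2c\overline{u}^h\bigr)}{1+c\overline{u}^h}=q(\overline{u}).
\]
Positivity of $\overline{v}$ in (\ref{pos}) forces $\overline{u}\in(0,1)$, so $F(1)>0$ and Lemma~\ref{lem1} applies. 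Its parts (i.1) and (i.4) then give the attractive/repelling dichotomy $q(\overline{u})\lessgtr 1$, and part (i.5) gives the non-hyperbolic case $q(\overline{u})=1$.

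The main obstacle is verifying $F(-1)>0$ throughout the admissible parameter set, which is required uniformly in branches (i.1), (i.4), and (i.5). Clearing denominators reduces this to
\[
(6-4\overline{u})(1+c\overline{u}^h)+h(1-\overline{u})(r-2)>0,
\]
whose left-hand side is linear and increasing in $r\geq 0$, so it suffices to verify the inequality at $r=0$; reducing then to a low-degree polynomial in $\overline{u}$ on $(0,1)$, the claim follows for $h=1$ and $h=2$ separately by evaluating at the endpoints and comparing leading coefficients of the resulting quadratic. A secondary technical point, for the application of (i.5), is confirming $-2<B<2$, i.e.\ $|T|<2$, so that the two eigenvalues on the unit circle form a complex-conjugate pair rather than a real pair whose product is $1$; this I would handle directly, using $T>-2$ (immediate from $\overline{u}<1$) and a bound on $T$ obtained by substituting the constraint $D=1$.
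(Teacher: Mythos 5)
Your proposal is correct and follows essentially the same route as the paper: triangular Jacobians at $E_0$ and $E_1$, and for $\overline{E}$ an application of Lemma~\ref{lem1} after checking $F(1)>0$, $F(-1)>0$ and identifying $\det J(\overline{E})=q(\overline{u})$ (your unsimplified Jacobian entries agree with the paper's form \eqref{jacpos} via $\gamma\overline{u}^h=r(1+c\overline{u}^h)$). The only stylistic difference is your monotonicity-in-$r$ argument for $F(-1)>0$, which is more roundabout than necessary: the paper's grouping $F(-1)=2+\frac{(1-\overline{u})(4-2h+rh+4c\overline{u}^h)}{1+c\overline{u}^h}$ makes positivity immediate for $h=1,2$ since $4-2h\geq 0$.
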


\begin{proof}
The Jacobian matrix of the operator \eqref{h12} is given by
\begin{equation}\label{jacc}
J(u,v) = \begin{bmatrix}
2 - 2u - \frac{h v u^{h-1}}{(1 + c u^h)^2} & -\frac{u^h}{1 + c u^h} \\
\frac{\gamma h v u^{h-1}}{(1 + c u^h)^2} & \frac{\gamma u^h}{1 + c u^h} + 1 - r
\end{bmatrix}.
\end{equation}

(i) Evaluating the Jacobian at the origin, we obtain
\[
J(0,0) = \begin{bmatrix}
2 & 0 \\
0 & 1 - r
\end{bmatrix}.
\]
The eigenvalues of this matrix are $2$ and $1 - r$. From this, the stability of $E_0$ follows directly.

(ii) At the fixed point $E_1$, the Jacobian is given by
\[
J(1,0) = \begin{bmatrix}
0 & -\frac{1}{1 + c} \\
0 & \frac{\gamma}{1 + c} + 1 - r
\end{bmatrix}.
\]
The eigenvalues are $\lambda_1 = 0$ and $\lambda_2 = \frac{\gamma}{1 + c} + 1 - r$. By analyzing the conditions $|\lambda_2| = 1$, $|\lambda_2| < 1$, and $|\lambda_2| > 1$, we derive the stability criteria as stated in assertion (ii).

(iii) Now, we analyze the stability of the unique positive fixed point $\overline{E} = (\overline{u}, \overline{v})$, where $\overline{u}$ and $\overline{v}$ are defined as in \eqref{pos}. Simplifying the Jacobian \eqref{jacc}, we obtain

\begin{equation}\label{jacpos}
J(\overline{u}, \overline{v}) = \begin{bmatrix}
\frac{(1 - \overline{u})(2 - h + 2c \overline{u}^h)}{1 + c \overline{u}^h} & -\frac{r}{\gamma} \\
\frac{\gamma h (1 - \overline{u})}{1 + c \overline{u}^h}  & 1
\end{bmatrix}.
\end{equation}

The characteristic polynomial is given by
\begin{equation}\label{charpol}
F(\lambda, \overline{u}) = \lambda^2 - p(\overline{u}) \lambda + q(\overline{u}),
\end{equation}
where
\begin{equation}\label{pq}
p(\overline{u}) = 1 + \frac{(1 - \overline{u})(2 - h + 2c \overline{u}^h)}{1 + c \overline{u}^h}, \quad
q(\overline{u}) = \frac{(1 - \overline{u})(2 - h + rh + 2c \overline{u}^h)}{1 + c \overline{u}^h}.
\end{equation}
Evaluating at $\lambda = 1$ and $\lambda = -1$, we obtain
\[
F(1, \overline{u}) = \frac{rh(1 - \overline{u})}{1 + c \overline{u}^h} > 0
\]
and
\[
F(-1, \overline{u}) = 2 + \frac{(1 - \overline{u})(4 - 2h + rh + 4c \overline{u}^h)}{1 + c \overline{u}^h} > 0.
\]
Thus, by Lemma \ref{lem1}, the nature of the positive fixed point depends on the value of $q(\overline{u})$. This completes the proof.
\end{proof}

\section{bifurcation analysis and chaos control}

\subsection{Neimark-Sacker bifurcation}

In this subsection, we identify the conditions under which a Neimark-Sacker bifurcation occurs at the unique positive fixed point
$\overline{E} = (\overline{u}, \overline{v}).$
According to Theorem~\ref{thm1}, the Jacobian matrix at the fixed point \( \overline{E} \) has a pair of complex conjugate eigenvalues with modulus 1 if $q(\overline{u}) = 1,$
where \( q(\overline{u}) \) is defined in (\ref{pq}). We choose \( \gamma \) as the bifurcation parameter and denote by \( \gamma_0 \) a solution of the equation $q(\overline{u}) = 1.$

To address the ecological interpretation of the bifurcation parameter $\gamma$, we note that $\gamma = \beta - \theta$, where $\beta$ denotes the conversion efficiency of zooplankton and $\theta$ represents the rate of toxin release by phytoplankton. Biologically, $\gamma$ captures the net energetic gain of zooplankton from consuming phytoplankton, balancing the nutritional benefit against the harmful effects of toxins. An increase in $\gamma$ may reflect either an improvement in conversion efficiency or a reduction in toxicity, or both. To avoid ambiguity, we consider biologically relevant scenarios in which either $\beta$ or $\theta$ is held constant. For instance, fixing $\beta$ allows us to interpret variations in $\gamma$ as changes in toxin production, while fixing $\theta$ highlights the impact of improved assimilation efficiency. Additionally, we acknowledge that in natural systems, both $\beta$ and $\theta$ may vary simultaneously, for example, under environmental stress or evolutionary adaptation, leading to more complex shifts in $\gamma$. In such cases, $\gamma$ serves as a useful aggregate measure summarizing the combined ecological effects of these changes. This broader perspective reinforces the relevance of $\gamma$ as a meaningful bifurcation parameter in our analysis.

In the following, we prove that the fixed point \( \overline{E} \) undergoes a Neimark-Sacker bifurcation when the parameter \( \gamma \) varies within a small neighborhood of \( \gamma_0 \). To proceed, we consider the following steps.

\textbf{The first step}. We introduce the variable changes $x=u-\overline{u},$ $y=v-\overline{v}$, which shift
the fixed point \( \overline{E} = (\overline{u}, \overline{v}) \) to the origin, and transform system $\ref{h12}$ into

\begin{equation}\label{bif1}
\left\{\begin{aligned}
&x^{(1)}=(x+\overline{u}) (2-x-\overline{u})-\frac{(x+\overline{u})^h(y+\overline{v})}{1+c(x+\overline{u})^h}-\overline{u}\\
&y^{(1)}=(y+\overline{v})\left(\frac{\gamma(x+\overline{u})^h}{1+c(x+\overline{u})^h}+1-r\right)-\overline{v}.
\end{aligned}\right.
\end{equation}

\textbf{The second step}. We give a small perturbation $\gamma^*$ to the parameter $\gamma,$ such that $\gamma=\gamma_0+\gamma^*.$ Then the perturbed system (\ref{bif1}) can then be expressed as:

\begin{equation}\label{bif2}
\left\{\begin{aligned}
&x^{(1)}=(x+\overline{u}) (2-x-\overline{u})-\frac{(x+\overline{u})^h(y+\overline{v})}{1+c(x+\overline{u})^h}-\overline{u}\\
&y^{(1)}=(y+\overline{v})\left(\frac{(\gamma_0+\gamma^*)(x+\overline{u})^h}{1+c(x+\overline{u})^h}+1-r\right)-\overline{v}.
\end{aligned}\right.
\end{equation}

 Then the Jacobian of the system (\ref{bif2}) at the point (0,0) we have
\begin{equation}\label{jac2}
J(0, 0) = \begin{bmatrix}
\frac{(1 - \overline{u})(2 - h + 2c \overline{u}^h)}{1 + c \overline{u}^h} & -\frac{r}{\gamma_0} \\
\frac{(\gamma_0+\gamma^*) h (1 - \overline{u})}{1 + c \overline{u}^h}  & 1+\frac{\gamma^* \overline{u}^h}{1 + c \overline{u}^h}
\end{bmatrix}.
\end{equation}
and its characteristic equation is
$$\lambda^2-a(\gamma^*)\lambda+b(\gamma^*)=0,$$
where
$$a(\gamma^*)=Tr(J)=1+\frac{(1 - \overline{u})(2 - h + 2c \overline{u}^h)}{1 + c \overline{u}^h}+\frac{\gamma^* \overline{u}^h}{1 + c \overline{u}^h}=p(\overline{u})+\frac{\gamma^* \overline{u}^h}{1 + c \overline{u}^h},$$
 and since $\gamma_0$ is a solution of $q(\overline{u})=1,$ we get
\begin{align*}
 b(\gamma^*)&=\det(J)=1+\frac{\gamma^* (1 - \overline{u})}{1 + c \overline{u}^h}\left(\frac{\overline{u}^h(2 - h + 2c \overline{u}^h)}{1 + c \overline{u}^h}+\frac{rh}{\gamma_0}\right).
\end{align*}
Note that $a(0)=p(\overline{u})<2$ and $b(0)=1,$ so we have the following complex conjugate roots

\begin{equation}\label{bif5}
\lambda_{1,2}=\frac{1}{2}[a(\gamma^*)\pm i \sqrt{4b(\gamma^*)-a^2(\gamma^*)}].
\end{equation}
Thus,
\begin{equation}\label{bif6}
|\lambda_{1,2}|=\sqrt{b(\gamma^*)}
\end{equation}
and
\begin{equation}\label{bif7}
\frac{d|\lambda_{1,2}|}{d\gamma^*}\Bigm|_{\gamma^*=0}\!=\!\frac{\overline{u}(1-\overline{u})}{2\sqrt{b(\gamma^*)}(1+c\overline{u}^h)}\left(\frac{\overline{u}^h(2 - h + 2c \overline{u}^h)}{1 + c \overline{u}^h}+\frac{rh}{\gamma_0}\right)\Bigm|_{\gamma^*=0}\!>0.
\end{equation}

Hence, the transversality condition
\[
\frac{d|\lambda_{1,2}|}{d\gamma^*} \bigg|_{\gamma^* = 0} \neq 0
\]
has been established. Now, we consider the non-degeneracy condition, i.e., \( \lambda_{1,2}^m(0) \neq 1 \) for \( m = 1, 2, 3, 4 \). Recall that \( 1 < a(0) < 2 \) and \( b(0) = 1 \), from this we have that \( \lambda_{1,2}^m(0) \neq 1 \) for all \( m = 1, 2, 3, 4 \).
Therefore, all the conditions for the occurrence of a Neimark–Sacker bifurcation are satisfied.

\textbf{The third step}. To derive the normal form of the system (\ref{bif2}) when \( \gamma^* = 0 \), we expand the system (\ref{bif2}) into a Taylor series around \( (x, y) = (0, 0) \) up to third order.
\begin{equation}\label{bif8}
\left\{\begin{aligned}
&x^{(1)}=a_{10}x\!+\!a_{01}y\!+\!a_{20}x^2\!+\!a_{11}xy\!+\!a_{02}y^2\!+\!a_{30}x^3\!+\!a_{21}x^2y\!+\!a_{12}xy^2\!+\!a_{03}y^3\!+\!O(\rho^4)\\
&y^{(1)}= b_{10}x\!+\!b_{01}y\!+\!b_{20}x^2\!+\!b_{11}xy\!+\!b_{02}y^2\!+\!b_{30}x^3\!+\!b_{21}x^2y\!+\!b_{12}xy^2\!+\!b_{03}y^3\!+\!O(\rho^4),
\end{aligned}\right.
\end{equation}
where $\rho=\sqrt{x^2+y^2}$  and
\begin{equation}
\begin{split}
&a_{10}=\frac{(1 - \overline{u})(2 - h + 2c \overline{u}^h)}{1 + c \overline{u}^h}, \ \ a_{01}=-\frac{r}{\gamma_0}, \ \ a_{20}=-1+\frac{h(1-\overline{u})(1-h+c\overline{u}^h+hc\overline{u}^h)}{2\overline{u}(1+c\overline{u}^h)^2}, \\
& a_{11}=-\frac{h\overline{u}^{h-1}}{(1+c\overline{u}^h)^2},\ \ a_{02}=a_{03}=a_{12}=0,\\
&a_{30}=-\frac{h(1-\overline{u})\left(2(1+c\overline{u}^h)^2+3h(c^2\overline{u}^{2h}-1)+h^2(1-4c\overline{u}^h+c^2\overline{u}^{2h})\right)}{6\overline{u}^2(1+c\overline{u}^h)^3}, \\
&a_{21}=\frac{h\overline{u}^{h-2}(1-h+c\overline{u}^h+hc\overline{u}^h)}{2(1+c\overline{u}^h)^3}, \\
&b_{10}=\frac{\gamma_0 h (1 - \overline{u})}{1 + c \overline{u}^h}, \ \ b_{01}=1, \ \ b_{02}=b_{03}=b_{12}=0, \\
&b_{20}=-\frac{\gamma_0h(1-\overline{u})(1-h+c\overline{u}^h+hc\overline{u}^h)}{2\overline{u}(1+c\overline{u}^h)^2}, \ \ b_{11}=\frac{\gamma_0h\overline{u}^{h-1}}{(1+c\overline{u}^h)^2},\\
&b_{30}=\frac{\gamma_0h(1-\overline{u})\left(2(1+c\overline{u}^h)^2+3h(c^2\overline{u}^{2h}-1)+h^2(1-4c\overline{u}^h+c^2\overline{u}^{2h})\right)}{6\overline{u}^2(1+c\overline{u}^h)^3},\\
&b_{21}=-\frac{\gamma_0h\overline{u}^{h-2}(1-h+c\overline{u}^h+hc\overline{u}^h)}{2(1+c\overline{u}^h)^3}.
\end{split}
\end{equation}
Then
 \[
 J(\overline{E})=\begin{bmatrix}
a_{10} & ~~a_{01}\\
b_{10} &~~ b_{01}
\end{bmatrix}
\]
and eigenvalues of the Jacobian are:
$$\lambda_{1,2}=\frac{1+a_{10}\mp i\alpha}{2},$$
where $\alpha=\sqrt{3-a_{10}^2-2a_{10}}.$ Since $1<1+a_{10}=p(\overline{u})<2$ it is obvious that $3-a_{10}^2-2a_{10}>0.$

Using $q(\overline{u})=1,$ we observed that corresponding eigenvectors are non-zero and they are
$$v_{1,2}=\begin{bmatrix}-\frac{r}{2\gamma_0}\\1\end{bmatrix}\mp i\begin{bmatrix} \frac{r\alpha}{2\gamma_0(1-a_{10})} \\ 0\end{bmatrix}.$$

\textbf{The fourth step}. Now we find the normal form of the system (\ref{bif2}). We rewrite the system (\ref{bif8}) in the following form:
\begin{equation}\label{sf}
\mathbf{x}^{(1)}=J\cdot\mathbf{x}+H(\mathbf{x})
\end{equation}
where $\mathbf{x}=(x,y)^T$ and $H(\mathbf{x})$ is nonlinear part of the system (\ref{bif8}) without $O$ notion, i.e.,
\[
H(\mathbf{x})=\begin{bmatrix} a_{20}x^2+a_{11}xy+a_{30}x^3+a_{21}x^2y\\ b_{20}x^2+b_{11}xy+b_{30}x^3+b_{21}x^2y\end{bmatrix}
\]

Let matrix
\[
T= \begin{bmatrix} \frac{r\alpha}{2\gamma_0(1-a_{10})} & -\frac{r}{2\gamma_0}\\ 0 & 1\end{bmatrix}
\]
then
 \[
 T^{-1}= \begin{bmatrix} \frac{2\gamma_0(1-a_{10})}{r\alpha} & \frac{1-a_{10}}{\alpha} \\
 0 & 1\end{bmatrix}.
 \]

Let us denote $m=\frac{\alpha}{1-a_{10}},$ $n=\frac{r}{2\gamma_0}.$ Then we have
\[
T= \begin{bmatrix} mn & -n\\ 0 & 1\end{bmatrix}, \ \  T^{-1}= \begin{bmatrix} \frac{1}{mn} & \frac{1}{m} \\
 0 & 1\end{bmatrix}.
\]

By transformation, we get that
\[
\begin{bmatrix} x \\ y\end{bmatrix}=T\cdot \begin{bmatrix} X \\ Y\end{bmatrix}.
\]

the system (\ref{bif8}) transforms into the following system

\begin{equation}\label{sf1}
\mathbf{X}^{(1)}=T^{-1}\cdot J\cdot T\cdot\mathbf{X}+T^{-1}\cdot H(T\cdot\mathbf{x})+O(\rho_1)
\end{equation}
where $\mathbf{X}=(X,Y)^T$ and $\rho_1=\sqrt{X^2+Y^2}.$ Denote
\[
H(T\cdot\mathbf{x})=\begin{bmatrix} f(X,Y) \\ g(X,Y)\end{bmatrix}
\]
where
\[
\begin{aligned}
f(X,Y)=&a_{20}m^2n^2X^2+(a_{11}mn-a_{20}mn^2)XY+(a_{20}n^2-a_{11}n)Y^2+a_{30}m^3n^3X^3\\
&+(a_{21}m^2n^2-3a_{30}m^2n^3)X^2Y+(3a_{30}mn^3-2a_{21}mn^2)XY^2+(a_{21}n^2-a_{30}n^3)Y^3,
\end{aligned}
\]

\[
\begin{aligned}
g(X,Y)=&b_{20}m^2n^2X^2+(b_{11}mn-b_{20}mn^2)XY+(b_{20}n^2-b_{11}n)Y^2+b_{30}m^3n^3X^3\\
&+(b_{21}m^2n^2-3b_{30}m^2n^3)X^2Y+(3b_{30}mn^3-2b_{21}mn^2)XY^2+(b_{21}n^2-b_{30}n^3)Y^3,
\end{aligned}
\]

If we denote
\[
T^{-1}\cdot H(T\cdot\mathbf{x})=\begin{bmatrix} F(X,Y) \\ G(X,Y)\end{bmatrix}
\]
then we have
\begin{equation}
\begin{split}
&F(X,Y)=c_{20}X^2+c_{11}XY+c_{02}Y^2+c_{30}X^3+c_{21}X^2Y+c_{12}XY^2+c_{03}Y^3,\\
&G(X,Y)=d_{20}X^2+d_{11}XY+d_{02}Y^2+d_{30}X^3+d_{21}X^2Y+d_{12}XY^2+d_{03}Y^3.
\end{split}
\end{equation}

where
\begin{equation}
\begin{split}
&c_{20}=a_{20}mn+b_{20}mn^2, \ \ c_{11}=a_{11}-a_{20}n+b_{11}n-b_{20}n^2,\\
&c_{02}=\frac{a_{20}n-a_{11}+b_{20}n^2-b_{11}n}{m}, \ \ c_{30}= a_{30}m^2n^2+b_{30}m^2n^3,\\
&c_{21}=a_{21}mn-3a_{30}mn^2+b_{21}mn^2-3b_{30}mn^3, \\
& c_{12}= 3a_{30}n^2-2a_{21}n+3b_{30}n^3-2b_{21}n^2, \ \ c_{03}= \frac{a_{21}n-a_{30}n^2+b_{21}n^2-b_{30}n^3}{m},\\
&d_{20}=b_{20}m^2n^2,\ \ d_{11}=b_{11}mn-b_{20}mn^2,\ \ d_{02}=b_{20}n^2-b_{11}n,\ \ d_{30}=b_{30}m^3n^3,\\
&d_{21}=b_{21}m^2n^2-3b_{30}m^2n^3,\ \ d_{12}=3b_{30}mn^3-2b_{21}mn^2,\ \ d_{03}=b_{21}n^2-b_{30}n^3.\\
\end{split}
\end{equation}

Moreover, the partial derivatives at $(0,0)$ are
\begin{equation}
\begin{split}
&F_{XX}=2c_{20}, \ \ F_{XY}=c_{11}, \ \ F_{YY}=2c_{02}, \\
&F_{XXX}=6c_{30}, \ \ F_{XXY}=2c_{21}, \ \ F_{XYY}=2c_{12}, \ \ F_{YYY}=6c_{03}\\
&G_{XX}=2d_{20}, \ \ G_{XY}=d_{11}, \ \ G_{YY}=2d_{02}, \\
&G_{XXX}=6d_{30}, \ \ G_{XXY}=2d_{21}, \ \ G_{XYY}=2d_{12}, \ \ G_{YYY}=6d_{03}.
\end{split}
\end{equation}

\textbf{The fifth step}. Now, we have to compute the discriminating quantity \( \mathcal{L} \) using the following formula, which determines the stability of the invariant closed curve bifurcated from the Neimark-Sacker bifurcation of the system \eqref{sf1}:
\begin{equation}\label{lya}
\mathcal{L}=-Re\left[\frac{(1-2\lambda_1)\lambda_2^2}{1-\lambda_1}L_{11}L_{20}\right]-\frac{1}{2}|L_{11}|^2-|L_{02}|^2+Re(\lambda_2 L_{21}),
\end{equation}
where
\begin{equation}
\begin{split}
&L_{20}=\frac{1}{8}[(F_{XX}-F_{YY}+2G_{XY})+i(G_{XX}-G_{YY}-2F_{XY})],\\
&L_{11}=\frac{1}{4}[(F_{XX}+F_{YY})+i(G_{XX}+G_{YY})],\\
&L_{02}=\frac{1}{8}[(F_{XX}-F_{YY}-2G_{XY})+i(G_{XX}-G_{YY}+2F_{XY})],\\
&L_{21}=\frac{1}{16}[(F_{XXX}\!+\!F_{XYY}\!+\!G_{XXY}\!+\!G_{YYY})\!+\!i(G_{XXX}\!+\!G_{XYY}\!-\!F_{XXY}\!-\!F_{YYY})].
\end{split}
\end{equation}

To sum up the discussion above, we obtain at the following concluding theorem.

\begin{thm}\label{bifurcation} Let $\gamma>r(1+c)$ and let \( \gamma = \gamma_0 \) be a solution of \( q(\overline{u}) = 1 \).
If the parameter \( \gamma \) varies in a small neighborhood of \( \gamma_0 \) then the system (\ref{h12}) undergoes a Neimark-Sacker bifurcation at the fixed point \( \overline{E} = (\overline{u}, \overline{v}) \). Furthermore, if \( \mathcal{L} < 0 \) (respectively, \( \mathcal{L} > 0 \)), an attracting (respectively, repelling) invariant closed curve bifurcates from the fixed point for \( \gamma > \gamma_0 \) (respectively, \( \gamma <\gamma_0 \)).
\end{thm}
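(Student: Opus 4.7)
The plan is to assemble the five preparatory steps already carried out in the excerpt into a verification of the Neimark-Sacker theorem's hypotheses, and then to apply the normal-form criterion to deduce the direction and stability of the bifurcating invariant curve. The theorem requires three conditions: (a) at $\gamma^* = 0$ the linearization at the origin of the shifted system \eqref{bif2} has a pair of complex conjugate eigenvalues on the unit circle; (b) a transversality condition $d|\lambda_{1,2}|/d\gamma^* \neq 0$ at $\gamma^* = 0$; and (c) a non-resonance condition $\lambda_{1,2}^{m}(0) \neq 1$ for $m = 1, 2, 3, 4$. For (a) I would note that, since $\gamma_0$ solves $q(\overline{u}) = 1$, we have $b(0) = \det J = 1$, while $a(0) = p(\overline{u})$ lies in $(1,2)$, so that $4b(0) - a^{2}(0) > 0$; this makes \eqref{bif5} a genuine complex conjugate pair with $|\lambda_{1,2}|^{2} = b(0) = 1$. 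Condition (b) is exactly the inequality \eqref{bif7}, which is strictly positive since $\overline{u} \in (0,1)$ and every bracketed factor is positive. For (c), the bounds $1 < a(0) < 2$ confine $\arg \lambda_{1}(0) = \arctan(\sqrt{4 - a^{2}(0)}/a(0))$ to the open interval $(0, \pi/3)$, which excludes all the forbidden arguments $0, \pi/2, 2\pi/3, \pi$.

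Next I would assemble the normal form. The third and fourth steps have produced the Taylor expansion \eqref{bif8} and the linear change of coordinates $\mathbf{x} = T\mathbf{X}$, with $T$ built from the real and imaginary parts of an eigenvector; conjugation brings the linear part into the canonical rotation form
\[
T^{-1} J T = \begin{bmatrix} \mathrm{Re}\,\lambda_{1} & -\mathrm{Im}\,\lambda_{1} \\ \mathrm{Im}\,\lambda_{1} & \mathrm{Re}\,\lambda_{1} \end{bmatrix},
\]
while the nonlinear terms are recorded as $(F,G)$ whose partial derivatives at the origin are already listed in the fifth step. I would then substitute these derivatives into the expressions for $L_{20}, L_{11}, L_{02}, L_{21}$ and finally into the formula \eqref{lya}, obtaining $\mathcal{L}$ as an explicit (if lengthy) function of $h, c, r, \overline{u}$. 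The standard Neimark-Sacker theorem, as formulated in Kuznetsov's \emph{Elements of Applied Bifurcation Theory}, then guarantees that a unique smooth closed invariant curve bifurcates from $\overline{E}$, attracting when $\mathcal{L} < 0$ and repelling when $\mathcal{L} > 0$. Because $|\lambda_{1,2}(\gamma^*)| = \sqrt{b(\gamma^*)}$ is strictly increasing through $1$ at $\gamma^* = 0$ by \eqref{bif7}, the modulus of the multiplier exceeds $1$ for $\gamma > \gamma_0$ and lies below $1$ for $\gamma < \gamma_0$; combining this with the sign of $\mathcal{L}$ reads off the side on which the attracting or repelling curve is born, as claimed.

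The main obstacle is bookkeeping rather than conceptual: after the substitution $\mathbf{x} = T\mathbf{X}$, the coefficients $c_{ij}, d_{ij}$ become cubic polynomials in $m$ and $n$ multiplied by the quantities $a_{ij}, b_{ij}$, so inserting them into \eqref{lya} produces a rather unwieldy closed-form expression and one must be careful with the real and imaginary parts in $L_{20}, L_{11}, L_{02}, L_{21}$. Fortunately every ingredient is already supplied in the setup, so the proof itself reduces to invoking hypotheses (a)--(c), citing the Neimark-Sacker normal-form theorem, and combining the monotonicity of $|\lambda_{1,2}(\gamma^*)|$ with the sign of $\mathcal{L}$ to identify the side of $\gamma_0$ on which the invariant curve appears.
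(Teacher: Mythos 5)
Your proposal is correct and follows essentially the same route as the paper: the theorem there is explicitly a summary of the five preparatory steps, and you verify the same three hypotheses (unit-modulus complex pair via $b(0)=1$ and $1<a(0)=p(\overline{u})<2$, transversality via \eqref{bif7}, non-resonance from the argument bound) before invoking the standard normal-form criterion with the sign of $\mathcal{L}$. Your explicit localization of $\arg\lambda_1(0)$ in $(0,\pi/3)$ actually supplies slightly more detail on the non-resonance condition than the paper does.
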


\subsection{Chaos control}

In this subsection, we will use the feedback control method for the system in order to
stabilize the chaotic orbit at an unstable fixed point of system (\ref{h12}). Towards this aim, by
adding feedback controlling force $\delta=-s_1(u - \overline{u})-s_2(v - \overline{v})$ with feedback gain
coefficients $s_1, s_2$ to the system (\ref{h12}), we obtain

\begin{equation}\label{h12c}
\begin{cases}
u^{(1)}=u(2-u)-\frac{u^hv}{1+cu^h}+\delta\\[2mm]
v^{(1)}=\frac{\gamma u^hv}{1+cu^h}+(1-r)v.
\end{cases}
\end{equation}

In addition, the Jacobian matrix of the controlled system (\ref{h12c}) is

\begin{equation}\label{jacchaos}
J^c(\overline{u}, \overline{v}) = \begin{bmatrix}
\frac{(1 - \overline{u})(2 - h + 2c \overline{u}^h)}{1 + c \overline{u}^h}-s_1 & -\frac{r}{\gamma}-s_2 \\
\frac{\gamma h (1 - \overline{u})}{1 + c \overline{u}^h}  & 1
\end{bmatrix}.
\end{equation}
whose characteristic polynomial is
\begin{equation}\label{chare}
\begin{aligned}
F(\lambda)=&\lambda^2-\left(1-s_1+\frac{(1 - \overline{u})(2 - h + 2c \overline{u}^h)}{1 + c \overline{u}^h}\right)\lambda+\frac{(1 - \overline{u})(2 - h + 2c \overline{u}^h)}{1 + c \overline{u}^h}-s_1\\
&+\frac{r h (1 - \overline{u})}{1 + c \overline{u}^h}+\frac{\gamma h (1 - \overline{u})}{1 + c \overline{u}^h}\cdot s_2.
\end{aligned}
\end{equation}

Let $\lambda_1$ and $\lambda_2$ are the roots of characteristic polynomial (\ref{chare}), then we have

\begin{equation}\label{sum}
\lambda_1+\lambda_2=-s_1+1+\frac{(1 - \overline{u})(2 - h + 2c \overline{u}^h)}{1 + c \overline{u}^h}
\end{equation}
and
\begin{equation}\label{dot}
\lambda_1\cdot\lambda_2=-s_1+\frac{(1 - \overline{u})(2 - h + 2c \overline{u}^h)}{1 + c \overline{u}^h}+\frac{r h (1 - \overline{u})}{1 + c \overline{u}^h}+\frac{\gamma h (1 - \overline{u})}{1 + c \overline{u}^h}\cdot s_2.
\end{equation}

\begin{lemma}\label{control} If the roots of the characteristic polynomial (\ref{chare}) satisfy $|\lambda_{1,2}|<1$ then (\ref{h12c}) is asymptotically stable.
\end{lemma}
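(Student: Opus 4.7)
The plan is to invoke the standard linearization principle for discrete-time dynamical systems, after a short verification that the feedback control does not displace the fixed point. First I would substitute $u=\overline{u}$, $v=\overline{v}$ into the controlled system (\ref{h12c}): by the very definition of the feedback force, $\delta=-s_1(\overline{u}-\overline{u})-s_2(\overline{v}-\overline{v})=0$, so at this point (\ref{h12c}) reduces to (\ref{h12}), and (\ref{pos}) guarantees that $\overline{E}=(\overline{u},\overline{v})$ is indeed a fixed point of the controlled map.

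Next, since the right-hand side of (\ref{h12c}) is smooth in a neighborhood of $\overline{E}$, its linearization at $\overline{E}$ is precisely the Jacobian $J^{c}(\overline{u},\overline{v})$ written in (\ref{jacchaos}). The eigenvalues of $J^{c}(\overline{u},\overline{v})$ are by definition the roots $\lambda_{1},\lambda_{2}$ of the characteristic polynomial $F(\lambda)$ displayed in (\ref{chare}). The hypothesis $|\lambda_{1,2}|<1$ therefore places the spectrum of $J^{c}(\overline{u},\overline{v})$ strictly inside the unit circle, so $\overline{E}$ is a hyperbolic attracting fixed point of the linearized system.

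To finish I would apply the classical local stability theorem for $C^{1}$ maps (the discrete Hartman--Grobman / linearized stability criterion): a fixed point of a smooth map is locally asymptotically stable whenever the spectral radius of its Jacobian is strictly less than one. Applying this to the controlled map at $\overline{E}$ delivers the asymptotic stability of $\overline{E}$ for (\ref{h12c}), which is the conclusion of the lemma. If a more quantitative argument were preferred, one could invoke Lemma~\ref{lem1} to convert $|\lambda_{1,2}|<1$ into the equivalent Jury-type conditions $F(1)>0$, $F(-1)>0$, and constant term less than one, and then construct a discrete Lyapunov function on the linear part, but this is not needed for the stated conclusion.

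I do not foresee any real obstacle: once $\overline{E}$ is identified as a fixed point of the controlled system and (\ref{jacchaos}) is recognized as the exact Jacobian there, the lemma is an immediate reformulation of the standard local stability criterion, and the only mild subtlety is noting that the feedback term vanishes at $\overline{E}$ so that the fixed point itself is preserved by the control.
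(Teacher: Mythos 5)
Your proposal is correct, and it proves the literal statement of Lemma~\ref{control}: you check that the feedback force $\delta$ vanishes at $\overline{E}$ so that the controlled map (\ref{h12c}) retains $\overline{E}$ as a fixed point, identify (\ref{jacchaos}) as its Jacobian there, and then invoke the standard linearized stability criterion for smooth discrete maps (spectral radius of the Jacobian strictly less than one implies local asymptotic stability). The paper's proof takes a genuinely different route: it does not argue the implication ``$|\lambda_{1,2}|<1$ $\Rightarrow$ asymptotic stability'' at all, but instead treats that implication as known and devotes the proof to locating \emph{where} the hypothesis holds in the feedback-gain plane. Concretely, it sets $\lambda_1\lambda_2=1$, $\lambda_1=1$, and $\lambda_1=-1$ in (\ref{sum}) and (\ref{dot}) to derive three marginal-stability lines $l_1,l_2,l_3$ in the $(s_1,s_2)$-plane, whose bounded triangle is exactly the set of gains for which $|\lambda_{1,2}|<1$ (this is the Jury-type computation you mention as an optional addendum, and it is what produces Fig.~\ref{reg}). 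So your argument supplies the logical content of the lemma as stated, while the paper's argument supplies the practically useful characterization of admissible controller gains; the two are complementary rather than in conflict, and a fully self-contained treatment would combine them in the way you sketch.
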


\begin{proof} We take $\lambda_1=\pm1$ and $\lambda_1\lambda_2=1$ in order to obtain lines of marginal stability
for the system (\ref{h12c}). So, if $\lambda_1\lambda_2=1$, then from (\ref{dot}), we obtain

\begin{equation}
l_1:  s_1-\frac{\gamma h (1 - \overline{u})}{1 + c \overline{u}^h}\cdot s_2+1-\frac{(1 - \overline{u})(2 - h+rh + 2c \overline{u}^h)}{1 + c \overline{u}^h}=0.
\end{equation}

If $\lambda_1=1$ then from (\ref{sum}) and (\ref{dot}), we obtain

\begin{equation}
l_2:  r+\gamma s_2=0.
\end{equation}

Finally, if $\lambda_1=-1$ then from (\ref{sum}) and (\ref{dot}), we obtain

\begin{equation}
l_3: 2s_1-\frac{\gamma h (1 - \overline{u})}{1 + c \overline{u}^h}\cdot s_2-2-\frac{(1 - \overline{u})(4 -2 h +rh+ 4c \overline{u}^h)}{1 + c \overline{u}^h}=0.
\end{equation}
Therefore, the lines $l_1, l_2$ and $l_3$ explain the conditions for the eigenvalues satisfying
$|\lambda_{1,2}|<1$. Moreover, triangular region restricted by the lines $l_1, l_2$ and $l_3$ contains
stable eigenvalues. In Fig. \ref{reg}, we represent two triangular regions on the plane with the exact parameter values for operators (\ref{h1}) and (\ref{h2}).
\end{proof}

\begin{figure}[h!]
    \centering
    \subfigure[For $h=1$]{\includegraphics[width=0.45\textwidth]{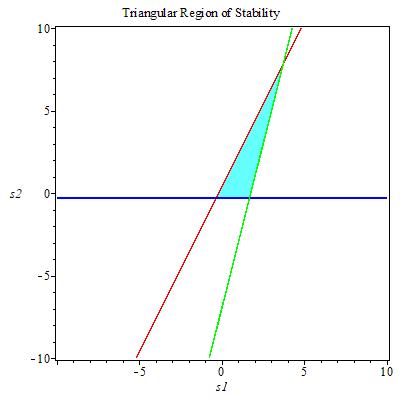}} \hspace{0.3in}
    \subfigure[For $h=2$]{\includegraphics[width=0.45\textwidth]{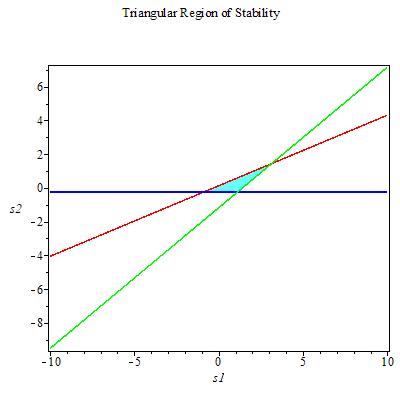}}
    \caption{Triangular regions of stability of the system  (\ref{h12c}) with parameters $r=0.5, \ \ c=1$ and $\gamma=2.$}
    \label{reg}
\end{figure}

\section{Global dynamics of (\ref{h12})}
 Let $h=1.$  Then operator (\ref{h12})  has the form
\begin{equation}\label{h1}
\begin{cases}
u^{(1)}=u(2-u)-\frac{uv}{1+cu}\\[2mm]
v^{(1)}=\frac{\gamma uv}{1+cu}+(1-r)v.
\end{cases}
\end{equation}

The positive fixed point $\overline{E}=(\overline{u}, \overline{v})$ has coordinates
$\overline{u}=\frac{r}{\gamma-rc},$  $\overline{v}=(1-\overline{u})(1+c\overline{u}).$

By solving the equation $q(\overline{u})=1$ with respect to $\gamma$ we get

\begin{equation}\label{gamma0}
\gamma=\frac{1-c+r+2rc+\sqrt{(1-c)^2+2r+6rc+r^2}}{2}=\gamma_0.
\end{equation}

Then the fixed point $\overline{E}$ remains attracting when $r(1+c)<\gamma<\gamma_0$ and it is repelling when $\gamma>\gamma_0.$ In Fig .\ref{sr} it is represented the stability region of the positive fixed point in the parameter space.

\begin{figure}[h!]
    \centering
    \subfigure[]{\includegraphics[width=0.45\textwidth]{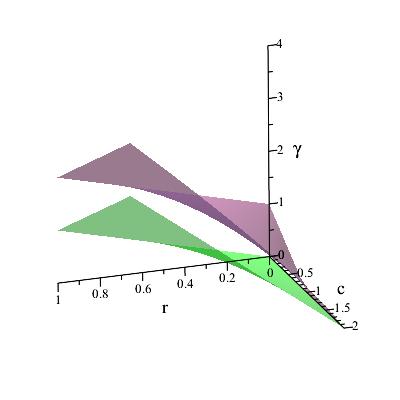}} \hspace{0.3in}
    \subfigure[]{\includegraphics[width=0.45\textwidth]{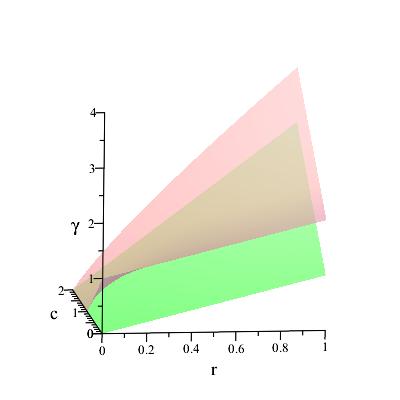}}
    \caption{Stability region between two surfaces (red and green) of the positive fixed point $\overline{E}$ for the operator (\ref{h1}), shown in the parameter space $(r, c,\gamma)$ with $0 < r \leq 1$ and $0 < c \leq 2$.}
    \label{sr}
\end{figure}

Let $h=2.$  Then operator (\ref{h12})  has the form
\begin{equation}\label{h2}
\begin{cases}
u^{(1)}=u(2-u)-\frac{u^2v}{1+cu^2}\\[2mm]
v^{(1)}=\frac{\gamma u^2v}{1+cu^2}+(1-r)v.
\end{cases}
\end{equation}
The positive fixed point $\overline{E}=(\overline{u}, \overline{v})$ has coordinates
$\overline{u}=\sqrt{\frac{r}{\gamma-rc}},$  $\overline{v}=\frac{(1-\overline{u})(1+c\overline{u}^2)}{\overline{u}}.$ From $q(\overline{u})=1,$ we obtain the following cubic equation with respect to $\gamma:$

\begin{equation}\label{gamma}
(2r-1)^2\gamma^3-\left(rc(2r-1)(6r-5)+4r^3\right)\gamma^2+4r^2c(r-1)(3rc-2c+2r)\gamma-4r^3c^2(r-1)^2(c+1)=0.
\end{equation}

It is evident that the set
\[
\mathcal{U} = \{(u,v) \in \mathbb{R}_+^2 \mid 0 \leq u \leq 2, v = 0\}
\]
is invariant with respect to the operators (\ref{h1}) and (\ref{h2}).

Furthermore, if \( 0 < r \leq 1 \), then the set
\[
\mathcal{V} = \{(u,v) \in \mathbb{R}_+^2 \mid u = 0, v \geq 0\}
\]
is also invariant under these operators.

Moreover, through straightforward calculations, it can be shown that a trajectory originating from the set \( \mathcal{U} \) converges to the fixed point \( E_1 = (1,0) \), while a trajectory starting from the set \( \mathcal{V} \) converges to the fixed point \( E_0 = (0,0) \).

\begin{pro}\label{prop1} Let \( v^{(1)} \) be defined as in (\ref{h12}).
If one of the following conditions on the parameters holds, then \( v^{(1)} \geq 0 \) for any \( u \in [0,1] \) and $v\geq0$:

(a) \( \gamma \leq -1 \), \( c \geq -1 - \gamma \), and \( 0 < r \leq \frac{c+1+\gamma}{c+1} \);

(b) \( -1 < \gamma \leq 0 \), \( c > 0 \), and \( 0 < r \leq \frac{c+1+\gamma}{c+1} \);

(c) \( \gamma > 0 \), \( c > 0 \), and \( 0 < r \leq 1 \).

\end{pro}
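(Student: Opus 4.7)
The plan is to factor the second equation of (\ref{h12}) as
\[
v^{(1)}=v\,\Phi(u),\qquad \Phi(u)=\frac{\gamma u^{h}}{1+cu^{h}}+1-r,
\]
and, since $v\geq 0$, reduce the claim to showing $\Phi(u)\geq 0$ for every $u\in[0,1]$ under each set of hypotheses. A preliminary check is that the denominator $1+cu^{h}$ is strictly positive on $[0,1]$: in case (c) this is clear from $c>0$; in case (b) it follows likewise from $c>0$; in case (a), the hypothesis $c\geq -1-\gamma$ combined with $\gamma\leq-1$ forces $c\geq 0$, which again gives $1+cu^{h}\geq 1$.

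Next I would introduce the substitution $s=u^{h}\in[0,1]$ and study
\[
\psi(s)=\frac{\gamma s}{1+cs}+1-r,\qquad s\in[0,1].
\]
A direct differentiation yields $\psi'(s)=\gamma/(1+cs)^{2}$, so the sign of $\gamma$ controls the monotonicity of $\psi$. This reduces the proof to evaluating $\psi$ at the correct endpoint of $[0,1]$ in each case.

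In case (c), $\gamma>0$ makes $\psi$ nondecreasing, so $\min_{[0,1]}\psi=\psi(0)=1-r\geq 0$ precisely when $r\leq 1$, which is the stated assumption. In cases (a) and (b), $\gamma\leq 0$ makes $\psi$ nonincreasing, so the minimum is attained at $s=1$, giving
\[
\min_{[0,1]}\psi=\psi(1)=\frac{\gamma}{1+c}+1-r=\frac{c+1+\gamma}{c+1}-r,
\]
which is nonnegative if and only if $r\leq\frac{c+1+\gamma}{c+1}$. The additional assumptions $c\geq -1-\gamma$ in (a) and $c>0$ in (b) are exactly what ensures that the upper bound $\frac{c+1+\gamma}{c+1}$ is nonnegative so that the admissible range for $r$ is nonempty and compatible with $r>0$.

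I do not expect any serious obstacle: the argument is a one-variable monotonicity analysis after the substitution $s=u^{h}$. The only subtlety worth being careful about is the separation of the parameter regions in (a)--(c), which is dictated precisely by whether $-1-\gamma$ is negative, zero, or positive, so the three cases are not redundant but correspond to the genuinely different sign configurations that keep $1+c>0$ and $\frac{c+1+\gamma}{c+1}\geq 0$.
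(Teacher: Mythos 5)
Your proposal is correct and follows essentially the same route as the paper: the paper likewise factors out $v$ and reduces the claim to the nonnegativity of $u^h(\gamma+c-rc)+1-r$ on $[0,1]$ (the numerator of your $\Phi(u)$ after clearing the positive denominator), leaving the endpoint check implicit where you carry it out via the monotonicity of $\psi$. Your version simply supplies the case-by-case details that the paper compresses into ``follows directly.''
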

\begin{proof}
The proof follows directly from the condition \( v^{(1)} \geq 0 \), which is equivalent to
\[
u^h (\gamma + c - r c) + 1 - r \geq 0.
\]
\end{proof}

\begin{lemma}\label{invm}
Assume that one of the conditions \((a)-(c)\) is satisfied and \( \gamma \leq r(1+c) \). Then the following sets are invariant with respect to the operator (\ref{h1}):

\begin{itemize}
    \item[(i)]
    If \( c \leq \frac{1}{2} \), then
    \[
    M_1 = \left\{ (u,v) \in \mathbb{R}^2 \mid 0 \leq u \leq 1, \, 0 \leq v \leq (2 - u)(1 + c u) \right\}.
    \]

    \item[(ii)] If \( c \geq 1 \), then
    \[
    M_2 = \left\{ (u,v) \in \mathbb{R}^2 \mid 0 \leq u \leq 1, \, 0 \leq v \leq 2 \right\}.
    \]

    \item[(iii)] If \( \frac{1}{2} < c < 1 \), then
    \[
    M_3 = \left\{ (u,v) \in \mathbb{R}^2 \mid 0 \leq u \leq 1, \, 0 \leq v \leq \min \{ 2, (2 - u)(1 + c u) \} \right\}.
    \]
\end{itemize}

\end{lemma}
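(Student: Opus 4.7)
The plan is to verify $F(M_i) \subseteq M_i$ in each of the three cases by decomposing $M_i$ into vertical slices $\{u\}\times[0, v_{\max}(u)]$ and analyzing the image of each slice. Writing $K(u) := \frac{\gamma u}{1+cu} + 1 - r$, the map \eqref{h1} sends $(u,v)$ to $\bigl(u(2-u) - \tfrac{uv}{1+cu},\, v K(u)\bigr)$, which is affine in $v$ for fixed $u$. Hence the image of the slice $\{u\}\times[0, v_{\max}(u)]$ is the line segment joining
\[
P_1 := (u(2-u),\, 0) \qquad\text{and}\qquad P_2 := \Bigl(u(2-u) - \tfrac{u v_{\max}(u)}{1+cu},\ v_{\max}(u)\, K(u)\Bigr).
\]
Since each $M_i$ is convex---the function $\phi(u) := (2-u)(1+cu)$ is concave as $\phi''(u) = -2c \leq 0$, the minimum of concave functions is concave, and $M_i$ is the hypograph of a concave bound intersected with a strip---it suffices to show $P_1, P_2 \in M_i$.

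Two universal estimates will drive the case analysis. First, under Proposition~\ref{prop1} together with $\gamma \leq r(1+c)$, I claim $0 \leq K(u) \leq 1$ on $[0,1]$. Since $u \mapsto u/(1+cu)$ is increasing, $K$ is monotone in $u$, so its extremes on $[0,1]$ occur at the endpoints: for the upper bound, $K(1) = \gamma/(1+c) + 1 - r \leq 1$ iff $\gamma \leq r(1+c)$ (when $\gamma \geq 0$), while $K(0) = 1 - r \leq 1$ (when $\gamma \leq 0$); for the lower bound $K \geq 0$, the three scenarios in Proposition~\ref{prop1} are exactly what the endpoint analysis demands. Consequently $v^{(1)} \leq v$ always, so the $v$-coordinate of $P_2$ is bounded by $v_{\max}(u)$, and $u^{(1)} \leq u(2-u) \leq 1$ on $[0,1]$, handling the $u$-range of both endpoints at once.

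For case~(i), $c \leq 1/2$: $\phi'(u) = 2c - 1 - 2cu \leq 0$ on $[0,1]$, so $\phi(u) \leq \phi(0) = 2$; then $P_2 = (0,\, \phi(u) K(u))$ with $\phi(u) K(u) \leq \phi(0)$, placing $P_2$ in $M_1$. For case~(ii), $c \geq 1$: concavity of $\phi$ with $\phi(0) = 2$ and $\phi(1) = 1+c \geq 2$ forces $\phi \geq 2$ on $[0,1]$, so the $u$-coordinate of $P_2$ is nonnegative (equivalent to $\phi(u) \geq 2$), and $2K(u) \leq 2$, placing $P_2$ in the rectangle $M_2$. For case~(iii), $1/2 < c < 1$: split on whether $\phi(u) \leq 2$ (which mirrors case~(i)) or $\phi(u) > 2$. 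In the latter sub-case $v_{\max}(u) = 2$, and a direct calculation yields
\[
U_2 = u(2-u) - \tfrac{2u}{1+cu} = \frac{u^2(2c-1-cu)}{1+cu} \leq 2c - 1 < \frac{2c-1}{c} = 2 - \tfrac{1}{c},
\]
using $u \leq 1$ for the first inequality and $c < 1$ for the second. Since $\phi(0) = \phi(2-1/c) = 2$ and $\phi$ is concave, $\phi \geq 2$ on $[0, 2 - 1/c]$, so $\phi(U_2) \geq 2 \geq 2 K(u)$, which places $P_2$ in $M_3$.

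The main obstacle is the final sub-case (iii-b), where the two competing upper bounds $v \leq 2$ and $v \leq \phi(u)$ interact. The decisive step is the chain $U_2 \leq 2c - 1 < 2 - 1/c$, which certifies that the image endpoint $P_2$ sits on the portion of the graph of $\phi$ that dominates $2$; without this explicit bound, the chord from $P_1$ to $P_2$ could dip below $\phi$ even though its $v$-coordinates stay below $2$. All remaining bookkeeping---nonnegativity of $v^{(1)}$, the contraction $v^{(1)} \leq v$, and $u^{(1)} \in [0,1]$---falls out of the two structural estimates above.
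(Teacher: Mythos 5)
Your proof is correct, but it takes a genuinely different route from the paper's. The paper argues pointwise on an arbitrary $(u,v)\in M_i$: it establishes $0\le u^{(1)}\le 1$ and $0\le v^{(1)}\le v$ much as you do (though it deduces $v^{(1)}\le v$ from $\tfrac{r}{\gamma-rc}\ge 1\ge u$ rather than from your cleaner bound $K(u)\le 1$), and then compares $v^{(1)}$ with the boundary value at $u^{(1)}$ by splitting on whether $u^{(1)}\ge u$ or $u^{(1)}<u$ and using monotonicity of $f(x)=(2-x)(1+cx)$ on $[0,1]$ (and of $\min\{2,f\}$ in case (iii)). You instead exploit the observation that the map is affine in $v$ on each vertical slice together with convexity of each $M_i$ (concavity of $\phi$ and of $\min\{2,\phi\}$), which reduces the whole verification to membership of the two image endpoints $P_1$ and $P_2$. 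This buys a noticeably sharper treatment of case (iii): the paper disposes of it with the one-line assertion that $v^{(1)}<v$ and $u^{(1)}<u$ keep the point in $M_3$, whereas your chain $U_2\le 2c-1<2-\tfrac1c$ combined with $\phi(0)=\phi(2-\tfrac1c)=2$ and concavity pins down exactly why the image endpoint stays under the binding constraint even where the two bounds $v\le 2$ and $v\le\phi(u)$ cross; it also sidesteps the paper's slightly delicate sub-case $u^{(1)}\ge u$, where $f(u^{(1)})\le f(u)$ and one must fall back on $v\le(1-u)(1+cu)\le 1$. The only cost is the need to justify the structural facts (affineness in $v$ and convexity of the $M_i$), both of which are immediate here.
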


\begin{proof}
Let \( (u,v) \in M_i \) for \( i = 1,2,3 \). We prove that \( (u^{(1)},v^{(1)}) \in M_i \). Since \( v \leq (2-u)(1+cu) \) for all \( u \in [0,1] \), we have \( u^{(1)} \geq 0 \).

Moreover, the maximum value of the function \( x(2-x) \) is 1, so
\[
u^{(1)} = u(2-u) - \frac{uv}{1+cu} \leq u(2-u) \leq 1.
\]
Thus, we obtain \( 0 \leq u^{(1)} \leq 1 \).

According to Proposition \ref{prop1}, it is clear that \( v^{(1)} \geq 0 \) for all \( u \geq 0 \). Moreover, from \( \gamma \leq r(1+c) \), we get \( \frac{r}{\gamma - rc} \geq 1 \geq u \), which implies \( v^{(1)} \leq v \) for all \( u \in [0,1] \).

Now, we need to prove the only remaining condition:
\[
v^{(1)} \leq (2 - u^{(1)})(1 + c u^{(1)}).
\]

If \( v \leq (1-u)(1+cu) \), then we have \( u^{(1)} \geq u \), and since \( v^{(1)} \leq v \), it follows that
\[
v^{(1)} \leq (2 - u^{(1)})(1 + c u^{(1)})
\]
is obvious. So, we consider the case \( v > (1-u)(1+cu) \), i.e., \( u^{(1)} < u \).

 \textbf{Case} (i): \( c \leq \frac{1}{2} \)
The function \( f(x) = (2-x)(1+cx) \) is decreasing on the interval \( [0,1] \). Therefore,
\[
f(u^{(1)}) \geq f(u),
\]
which implies
\[
v^{(1)} \leq v \leq (2 - u)(1 + c u) \leq (2 - u^{(1)})(1 + c u^{(1)}).
\]
The form of \( M_1 \) is given in Fig. \ref{image1}.

 \textbf{Case }(ii): \( c \geq 1 \)
Consider the equation \( f(x) = 2 \). The positive solution is
\[
x_0 = \frac{2c - 1}{c}.
\]
Since \( c \geq 1 \), we have \( x_0 \geq 1 \). Thus, the rectangle \( M_2 \) is located inside \( M_1 \) (Fig \ref{image2}), which implies
\[
v^{(1)} \leq v \leq 2 \leq (2 - u^{(1)})(1 + c u^{(1)})
\]
in \( M_2 \).

 \textbf{Case} (iii): \( \frac{1}{2} < c < 1 \)
In this case, \( x_0 < 1 \). Since \( v^{(1)} < v \) and \( u^{(1)} < u \), the point \( (u^{(1)}, v^{(1)}) \) remains in the set \( M_3 \) (Fig \ref{image3}).

Thus, the proof is complete.
\end{proof}

\begin{figure}[h]
    \centering
    \subfigure[The set $M_1$ with $c=0.4$]{
        \includegraphics[width=0.3\textwidth]{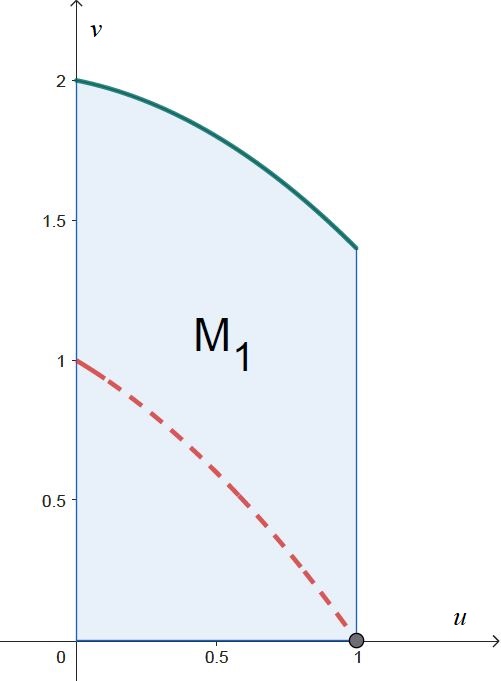}
        \label{image1}
    }
    \subfigure[The set $M_2$ with $c=1.5$]{
        \includegraphics[width=0.25\textwidth]{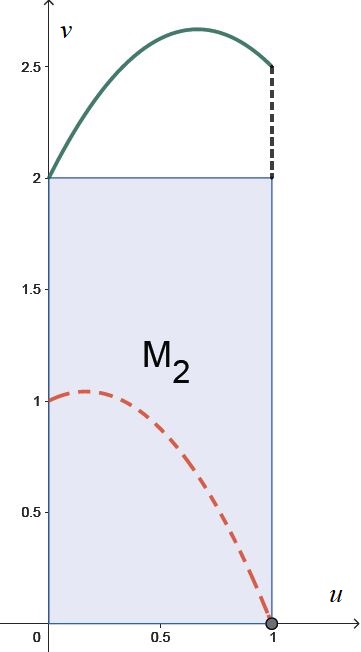}
        \label{image2}
    }
    \subfigure[The set $M_3$ with $c=0.75$]{
        \includegraphics[width=0.25\textwidth]{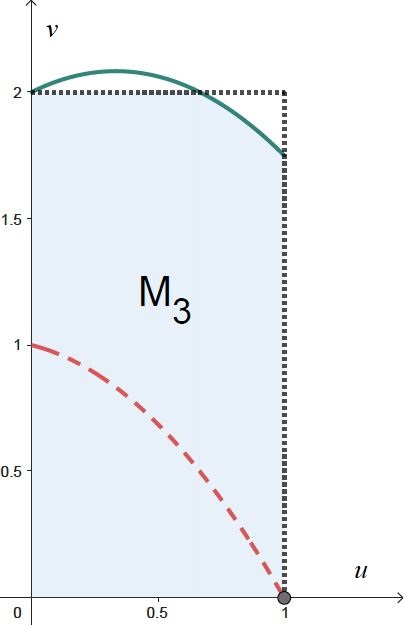}
        \label{image3}
    }
    \caption{Three invariant sets}
    \label{invsets}
\end{figure}

\begin{rk} If $c>1/2$ then the set $M_2$ is not invariant w.r.t. operator (\ref{h1}). For example, if $\gamma=10, c=19, r=0.5,$ then the point $(u,v)=(0.1; 4.62)\in M_2,$ while $(u^{(1)},v^{(1)})\approx(0.03; 3.9)\notin M_2$ which means $v^{(2)}<0.$
\end{rk}

\begin{pro}\label{prop2}
Assume that one of the conditions \((a)-(c)\) is satisfied and \( \gamma \leq r(1+c) \). Then, for any initial point \( (u^0,v^0) \in M_i \), where \( i = 1,2,3 \) and $u^0>0$, the trajectory of the operator (\ref{h1}) converges to the fixed point \( E_1 = (1,0) \).
\end{pro}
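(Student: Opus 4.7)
The plan is to show first that the $v$-coordinate of the trajectory decreases monotonically to $0$, and then to deduce $u_n \to 1$ by a Lyapunov-type estimate, using the saddle nature of $E_0$ in the $u$-direction to keep $u_n$ bounded away from $0$ for large $n$.

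For the monotonicity of $v_n$, I would start from the identity $v_{n+1} = v_n\bigl(1 - r + \tfrac{\gamma u_n}{1+cu_n}\bigr)$. Since the invariance of $M_i$ (Lemma~\ref{invm}) forces $u_n \in [0,1]$, the hypothesis $\gamma \le r(1+c)$ yields $\tfrac{\gamma u_n}{1+cu_n} \le r$, so $v_{n+1} \le v_n$. Combined with $v_n \ge 0$ from Proposition~\ref{prop1}, this gives $v_n \downarrow v^{*}$ for some $v^{*} \ge 0$. To rule out $v^{*} > 0$, note that $v_{n+1}/v_n \to 1$ would force $\tfrac{\gamma u_n}{1+cu_n} \to r$; under $\gamma < r(1+c)$ this requires $u_n \to \tfrac{r}{\gamma - rc} > 1$, contradicting $u_n \le 1$, while for $\gamma = r(1+c)$ it gives $u_n \to 1$ and substituting into the $u$-equation forces $v^{*}/(1+c) = 0$, again a contradiction. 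Hence $v_n \to 0$.

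With $v_n \to 0$ in hand, set $w_n = 1 - u_n \in [0,1]$. The $u$-equation rearranges to $w_{n+1} = w_n^2 + \tfrac{u_n v_n}{1+cu_n}$, whose second term $\epsilon_n$ tends to $0$. Once a uniform lower bound $u_n \ge \mu > 0$ is available for $n \ge N$, one has $w_n \le 1 - \mu$, hence $w_{n+1} \le (1-\mu)w_n + \epsilon_n$, and a standard geometric estimate yields $w_n \to 0$, i.e.\ $u_n \to 1$. The required lower bound $u_n \ge \mu$ follows from ruling out $(u_n, v_n) \to (0,0)$: along any such subsequence $u_{n+1}/u_n = 2 - u_n - \tfrac{v_n}{1+cu_n} \to 2$, which would force $u_n$ to grow geometrically, contradicting $u_n \to 0$.

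The main obstacle I anticipate is the bookkeeping needed to keep $u_n$ strictly positive for all $n$. The set $M_i$ has an upper boundary $v = (2-u)(1+cu)$ on which $u^{(1)}$ vanishes, so orbits touching that curve slide onto the $v$-axis and converge to $E_0$. For interior data $(u^0, v^0)$ with $v^0 < (2-u^0)(1+cu^0)$, the strict inequality $v_{n+1} < v_n$, which holds whenever $u_n \neq 1$, propagates to ensure $u_n > 0$ for every $n$; the degenerate case $\gamma = r(1+c)$ can be handled by passing to the $\omega$-limit set, which lies in $\{v=0\}$ and is invariant under the one-dimensional map $u \mapsto u(2-u)$, whose only attractor compatible with $u_0 > 0$ is the fixed point $u = 1$.
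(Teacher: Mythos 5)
Your argument reaches the conclusion by a genuinely different route. The paper defines $L(u,v)=v$, notes $\Delta L\le 0$ on $M_i$, and appeals to LaSalle's invariance principle; you instead prove directly that $v_n$ decreases monotonically to $0$ (ruling out a positive limit by the ratio argument, with a separate treatment of the degenerate case $\gamma=r(1+c)$) and then obtain $u_n\to 1$ from the exact recursion $w_{n+1}=w_n^{2}+\epsilon_n$ for $w_n=1-u_n$ together with a contraction estimate. Your route is more elementary, and it is also more honest about the one place where the statement genuinely needs care: on the upper boundary $v=(2-u)(1+cu)$ of $M_1$ one has $u^{(1)}=0$, after which the orbit lies on the $v$-axis and converges to $E_0$, not $E_1$ (for instance $c=0.4$, $r=0.5$, $\gamma=0.7$, $(u^0,v^0)=(0.5,1.8)$ gives $(u^{(1)},v^{(1)})=(0,1.425)$, then $u_n\equiv 0$). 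The paper's LaSalle argument passes over this: the largest invariant subset of $\{\Delta L=0\}\cap M_i$ is the whole segment $[0,1]\times\{0\}$, which contains $E_0$ as well as $E_1$, so LaSalle alone does not single out $E_1$, and some control on $u_n$ is needed in either proof. Your remark that interior data must be distinguished from boundary data is therefore a substantive correction, not mere bookkeeping.

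Two steps of your write-up need tightening before the proof is complete. First, the uniform lower bound $u_n\ge\mu>0$ does not follow from the subsequence computation you describe: knowing $u_{n_k+1}/u_{n_k}\to 2$ along a subsequence with $u_{n_k}\to 0$ constrains only isolated pairs of iterates and forces no geometric growth of the full sequence. The correct argument is global: pick $N$ with $v_n<\tfrac12$ for all $n\ge N$; since $u_{n+1}=u_n\bigl(2-u_n-\tfrac{v_n}{1+cu_n}\bigr)$ and $\tfrac{v_n}{1+cu_n}\le v_n$, the multiplier is at least $\tfrac32-u_n$, hence at least $1$ whenever $u_n\le\tfrac12$ and at least $\tfrac12$ always, so by induction $u_n\ge\min\{u_N,\tfrac14\}$ for all $n\ge N$. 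This is exactly the $\mu$ your contraction step requires. Second, the phrase ``the strict inequality $v_{n+1}<v_n$ propagates to ensure $u_n>0$'' conflates two different inequalities; what must actually be propagated is $v_n<(2-u_n)(1+cu_n)$, and this does hold for interior data, but only by rerunning the case analysis in the proof of Lemma~\ref{invm} with strict inequalities (e.g.\ for $c\le\tfrac12$ and $u^{(1)}<u$ one gets $v^{(1)}\le v<f(u)\le f(u^{(1)})$ with $f(x)=(2-x)(1+cx)$ nonincreasing on $[0,1]$, and the case $u^{(1)}\ge u$ is handled by $v^{(1)}\le(1-u)(1+cu)\le 1<f(u^{(1)})$). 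With these repairs your proof is correct and, for interior initial points, sharper than the published one.
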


\begin{proof}
Define the following function on the set \( M_i \):
\[
L(u,v) = v.
\]
Clearly, \( L(u,v) \geq 0 \) and \( L(1,0) = 0 \). Consider the difference:
\[
\Delta L = L(u^{(1)},v^{(1)}) - L(u,v) = v^{(1)} - v \leq 0.
\]
The set where \( \Delta L = 0 \) is given by:
\[
\mathcal{B} = \{(x,y) \in M_i : \Delta L = 0\} = \{(u,0)\}.
\]
Assume \( u > 0 \) (the case \( u = 0 \) was mentioned earlier). Then, the largest invariant set in \( \mathcal{B} \) is the fixed point \( E_1 = (1, 0) \).

Since the sets \( M_i \) are compact, it follows from LaSalle's Invariance Principle that any trajectory starting in \( M_i \) converges to the fixed point \( E_1 \).
\end{proof}

Define the function
\begin{equation}
\psi(x)=\frac{(2 - x)(1 + c x^2)}{x}
\end{equation}

\begin{lemma}\label{invn}
Assume that one of the conditions \((a)-(c)\) is satisfied and \( \gamma \leq r(1+c) \). Then the following sets are invariant with respect to the operator (\ref{h2}):

\begin{itemize}
    \item[(i)]
    If \( c \leq \frac{27}{4} \), then
    \[
    N_1 = \left\{ (u,v) \in \mathbb{R}^2 \mid 0 \leq u \leq 1, \, 0 \leq v \leq \frac{(2 - u)(1 + c u^2)}{u} \right\}.
    \]

    \item[(ii)] If \( c > \frac{27}{4} \), then
    \[
    N_2 = \left\{ (u,v) \in \mathbb{R}^2 \mid 0 \leq u \leq 1, \, 0 \leq v \leq \psi_{min} \right\}.
    \]

  \end{itemize}

\end{lemma}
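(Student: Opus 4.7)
The plan is to follow the template of the proof of Lemma~\ref{invm}, verifying successively that each defining inequality of $N_i$ is preserved under the map~(\ref{h2}). Fix $(u,v) \in N_i$ and prove $(u^{(1)}, v^{(1)}) \in N_i$. First I would establish the basic bounds. For $u>0$, the constraint $v \leq \psi(u)$ (respectively $v \leq \psi_{\min}$) is equivalent to $u(2-u)(1+cu^2) \geq u^2 v$, and hence $u^{(1)} \geq 0$; combined with $u^{(1)} \leq u(2-u) \leq 1$ this gives $u^{(1)} \in [0,1]$. Nonnegativity $v^{(1)} \geq 0$ follows from Proposition~\ref{prop1} under hypotheses (a)--(c). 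The assumption $\gamma \leq r(1+c)$ together with $u^2 \leq 1$ yields $u^2(\gamma - rc) \leq r$, i.e., $\frac{\gamma u^2}{1+cu^2} \leq r$, whence $v^{(1)} \leq v$.

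Next, a monotonicity analysis of $\psi$ on $(0,1]$: a direct computation yields $\psi'(x) = -\frac{2}{x^2} + 2c(1-x)$, so the equation $\psi'(x) = 0$ reduces to $c(1-x)x^2 = 1$. Since $(1-x)x^2$ attains its maximum $\frac{4}{27}$ on $[0,1]$ at $x = \frac{2}{3}$, when $c \leq \frac{27}{4}$ there are no critical points in $(0,1]$ and $\psi$ is strictly decreasing; when $c > \frac{27}{4}$, there are exactly two critical points, giving a local minimum $x_1$ and a local maximum $x_2$ with $\psi_{\min} := \psi(x_1)$.

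Case (ii), $c > \frac{27}{4}$, is then immediate: from $v \leq \psi_{\min}$ and $v^{(1)} \leq v$ we get $v^{(1)} \leq \psi_{\min}$, so $(u^{(1)}, v^{(1)}) \in N_2$. For case (i), $c \leq \frac{27}{4}$, I would split into sub-cases as in Lemma~\ref{invm}: if $u^{(1)} \leq u$, then monotonicity of $\psi$ gives $\psi(u^{(1)}) \geq \psi(u) \geq v \geq v^{(1)}$, as needed; the remaining sub-case corresponds to $u^{(1)} > u$, equivalently $v \leq \frac{(1-u)(1+cu^2)}{u}$.

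The main obstacle is this last sub-case: monotonicity of $\psi$ now runs against us, so one cannot chain inequalities through $\psi(u)$. The plan here is to verify $v^{(1)} \leq \psi(u^{(1)})$ directly by rewriting it as $u^{(1)} v^{(1)} \leq (2-u^{(1)})(1+c(u^{(1)})^2)$, substituting the explicit formulas for $u^{(1)}$ and $v^{(1)}$, and exploiting the combined slack from $v \leq \frac{(1-u)(1+cu^2)}{u}$, from $v^{(1)} \leq v$, and from the threshold $c \leq \frac{27}{4}$, which controls how fast $\psi$ can drop on $[u, u^{(1)}]$. This algebraic estimate is the technical heart of the argument.
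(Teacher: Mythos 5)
Your skeleton is the same as the paper's: establish $0\le u^{(1)}\le 1$ and $v^{(1)}\ge 0$, deduce $v^{(1)}\le v$ from $\gamma\le r(1+c)$, analyze the sign of $\psi'(x)=-\frac{2(cx^3-cx^2+1)}{x^2}$ via the maximum $\frac{4}{27}$ of $(1-x)x^2$, dispose of case (ii) by chaining $v^{(1)}\le v\le\psi_{\min}$, and in case (i) use the decreasingness of $\psi$ when $u^{(1)}\le u$. Every step you actually carry out is correct and matches the paper.

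The gap is the sub-case $u^{(1)}>u$ of case (i): you label it ``the technical heart,'' describe a plan to substitute the explicit formulas and perform an algebraic estimate, and never perform it. A proof cannot stop at a plan for the step it declares hardest. Moreover, this is in fact the \emph{easy} sub-case (the paper dismisses it as following ``directly''), and no use of the threshold $c\le\frac{27}{4}$ is needed there: from $v\le\frac{(1-u)(1+cu^2)}{u}$ and $u\le u^{(1)}\le u(2-u)\le 2u\le 2$, since $t\mapsto\frac{2-t}{t}$ is decreasing and $t\mapsto 1+ct^2$ is increasing, one gets
\[
\psi(u^{(1)})=\frac{2-u^{(1)}}{u^{(1)}}\bigl(1+c(u^{(1)})^2\bigr)\ \ge\ \frac{2-2u}{2u}\,(1+cu^2)=\frac{(1-u)(1+cu^2)}{u}\ \ge\ v\ \ge\ v^{(1)},
\]
so the slack comes entirely from the factor $1-u$ versus $2-u$, not from controlling ``how fast $\psi$ can drop on $[u,u^{(1)}]$.'' A secondary caveat, which your write-up shares with the paper's own proof: in case (ii) the implication $v\le\psi_{\min}\Rightarrow u^{(1)}\ge 0$ requires $\psi_{\min}=\psi(x_1)$ to be the \emph{global} minimum of $\psi$ on $(0,1]$, not merely the local one; at a critical point one computes $\psi(x_1)-\psi(1)=\frac{(1-x_1)(2x_1-1)}{x_1^2}$, which is positive whenever $x_1>\frac12$, i.e.\ for $\frac{27}{4}<c<8$, so this step needs an explicit justification or a restriction rather than being absorbed into the parenthetical ``respectively $v\le\psi_{\min}$.''
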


\begin{proof} Let $(u,v)\in N_{i}, i=1,2.$ As shown in the proof of Lemma \ref{invm}, $0\leq u^{(1)}\leq1$ and $v^{(1)\geq0}$ is obvious. We have to prove that $v^{(1)} \leq \frac{(2 - u^{(1)})(1 + c (u^{(1)})^2)}{u^{(1)}}$ (case (i))and $v^{(1)} \leq \psi_{min}$ (case (ii)). Here also the condition \( \gamma \leq r(1+c) \) supplies that \( v^{(1)} \leq v \). Moreover, if \( v \leq \frac{(1-u)(1+cu)}{u} \), then we have \( u^{(1)} \geq u \) and
\[
v^{(1)} \leq \frac{(2 - u^{(1)})(1 + c u^{(1)})}{u^{(1)}}
\]
follows directly. So, we consider the case \( v >\frac{ (1-u)(1+cu)}{u} \), i.e., \( u^{(1)} < u \).

\textbf{Case} (i): \( c \leq \frac{27}{4} \)

The derivative of the function $\psi(x)$ is $\psi'(x)=-\frac{2(cx^3-cx^2+1)}{x^2}$ and $\psi'(x)\leq0$ if and only if $cx^3-cx^2+1\geq0.$  Denote $\nu(x)=cx^3-cx^2+1.$ Then $\nu(0)=\nu(1)=1$ and $\nu(x)$ has its minimum value at $x_0=\frac{2}{3}.$ So, $\nu(x_0)=\frac{27-4c}{27}.$  Since \( c \leq \frac{27}{4} \) we have that $cx^3-cx^2+1\geq0,$ so the function $\psi(x)$ is decreasing. Thus, $\psi(u^{(1)})\geq\psi(u)$ which we arrive
\[
v^{(1)} \leq v \leq \frac{(2 - u)(1 + c u)}{u} \leq \frac{(2 - u^{(1)})(1 + c u^{(1)})}{u^{(1)}}.
\]

\textbf{Case} (ii): \( c > \frac{27}{4} \)

Then since $\nu(x_0)=\frac{27-4c}{27}<0$ the function $\nu(x)$ has two zeros $x_{min}\in(0,2/3)$ and $x_{max}\in(2/3,1).$ At the point $x_{min}$, the function $\psi(x)$ has a minimum value $\psi_{min}.$ So, from the structure of the set $N_2$ we get the proof of the lemma.
\end{proof}

\begin{pro}\label{prop3}
Assume that one of the conditions \((a)-(c)\) is satisfied and \( \gamma \leq r(1+c) \). Then, for any initial point \( (u^0,v^0) \in N_i \), where \( i = 1,2 \) and $u^0>0$, the trajectory of the operator (\ref{h2}) converges to the fixed point \( E_1 = (1,0) \).
\end{pro}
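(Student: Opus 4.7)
The plan is to adapt the proof of Proposition~\ref{prop2} essentially verbatim, replacing $M_i$ with the compact invariant sets $N_1$ and $N_2$ supplied by Lemma~\ref{invn}. I would again take the Lyapunov function $L(u,v)=v$ and invoke LaSalle's invariance principle on $N_i$.

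First I would verify that $L$ is nonincreasing along orbits. From the second component of (\ref{h2}),
\[
\Delta L = v^{(1)}-v = v\left(\frac{\gamma u^2}{1+cu^2}-r\right).
\]
If $\gamma\le 0$ the bracket is at most $-r<0$; if $\gamma>0$, the map $u\mapsto \gamma u^2/(1+cu^2)$ is increasing on $[0,1]$ (its derivative equals $2\gamma u/(1+cu^2)^2\ge 0$), so the hypothesis $\gamma\le r(1+c)$ yields
\[
\frac{\gamma u^2}{1+cu^2}\le\frac{\gamma}{1+c}\le r \qquad \text{for all } u\in[0,1],
\]
and hence $\Delta L\le 0$ throughout $N_i$.

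Next I would identify the largest invariant subset of $\mathcal{B}=\{(u,v)\in N_i:\Delta L=0\}$. In the strict case $\gamma<r(1+c)$, $\mathcal{B}=\{v=0\}\cap N_i$; in the borderline case $\gamma=r(1+c)$, $\mathcal{B}$ also contains $\{u=1\}$, but any point $(1,v)$ with $v>0$ has $u^{(1)}=1-v/(1+c)<1$, so this extra piece is not forward invariant. Restricted to $\{v=0\}$ the operator collapses to $u\mapsto u(2-u)$, under which every $u\in(0,1]$ converges to $1$. Therefore the largest invariant set in $\mathcal{B}$ reachable from an orbit with $u^0>0$ is the singleton $\{E_1\}$. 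Since $N_i$ is compact and $L$ is continuous with $\Delta L\le 0$, LaSalle's principle then yields convergence to $E_1$.

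The main technical concern, and the one place where the $h=2$ case differs slightly from $h=1$, is ensuring that $u$ stays strictly positive under iteration, so that the orbit does not drift onto the invariant axis $\mathcal{V}=\{u=0\}$, where the dynamics would instead converge to $E_0$. The invariance of $N_i$ already gives $u^{(1)}\ge 0$, with equality only on the boundary curve $v=\psi(u)$ (whose defining equation is exactly $u^{(1)}=0$); in the interior of $N_i$ the inequality is strict, and because $L$ strictly decreases off $\{v=0\}$ the orbit enters this interior after finitely many steps and remains there. This is a mild verification rather than a genuine obstacle, so the argument proceeds in complete parallel with Proposition~\ref{prop2}.
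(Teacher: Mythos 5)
Your proposal is correct and follows exactly the route the paper takes: the paper's own proof of Proposition~\ref{prop3} consists of the single remark that it is ``very similar to the proof of Proposition~\ref{prop2},'' i.e.\ the Lyapunov function $L(u,v)=v$ together with LaSalle's invariance principle on the compact invariant sets $N_i$ from Lemma~\ref{invn}. Your write-up is in fact somewhat more careful than the paper's, since you explicitly verify $\Delta L\le 0$ for $h=2$, handle the borderline case $\gamma=r(1+c)$, and address the possibility of orbits reaching the invariant axis $u=0$.
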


\begin{proof} The proof is very similar to the proof of Proposition \ref{prop2}.
\end{proof}

\begin{rk}
If \( \gamma > r(1+c) \), i.e., a positive fixed point exists, then the behavior of the trajectory varies depending on the parameters and initial conditions. In Fig. \ref{figh12}, we illustrate several cases where the trajectory converges to the positive fixed point.
\end{rk}

\begin{figure}[h!]
    \centering
    \subfigure[\tiny$r=0.5, c=1, \gamma= 1.2, \overline{E}\approx(0.71,0.48), (u^0, v^0)=(0.8, 1.5).$]{\includegraphics[width=0.45\textwidth]{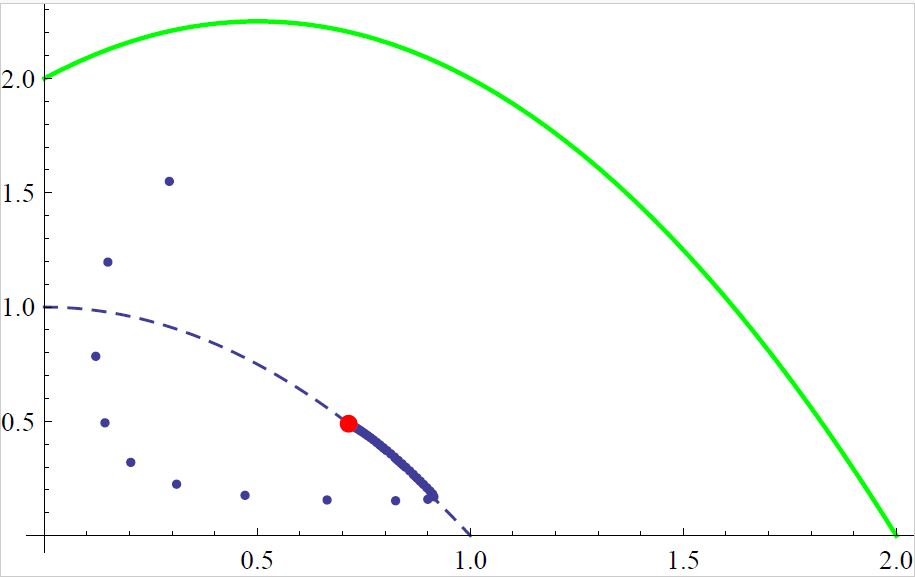}} \hspace{0.3in}
    \subfigure[\tiny$r=0.5, c=1, \gamma= 1.6, \overline{E}\approx(0.45,0.79), (u^0, v^0)=(0.8, 1.5).$]{\includegraphics[width=0.45\textwidth]{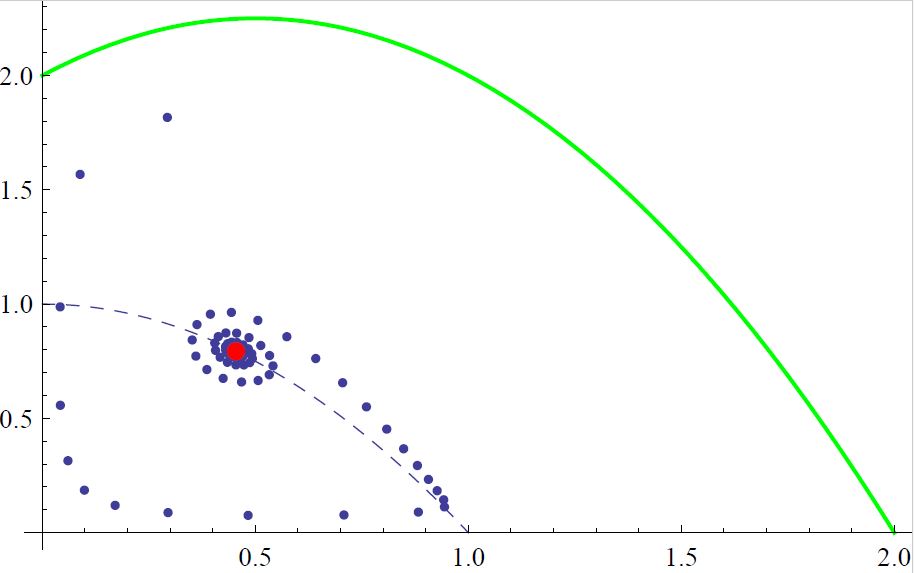}}
    \subfigure[\tiny$r=0.8, c=2, \gamma= 3, \overline{E}\approx(0.75,0.69), (u^0, v^0)=(0.86, 2.4).$]{\includegraphics[width=0.45\textwidth]{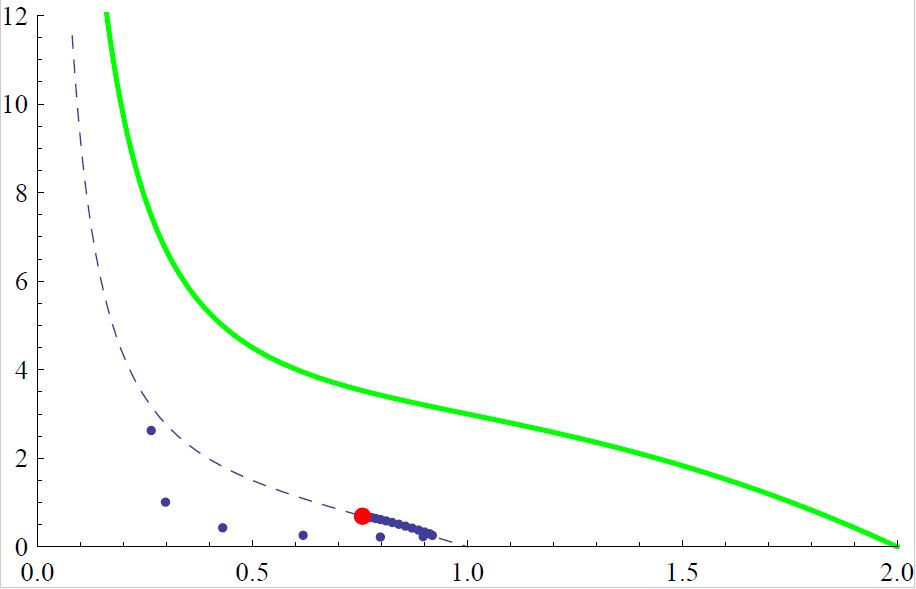}} \hspace{0.3in}
    \subfigure[\tiny$r=0.8, c=12, \gamma= 12, \overline{E}\approx(0.57,3.66), (u^0, v^0)=(0.6, 8).$]{\includegraphics[width=0.45\textwidth]{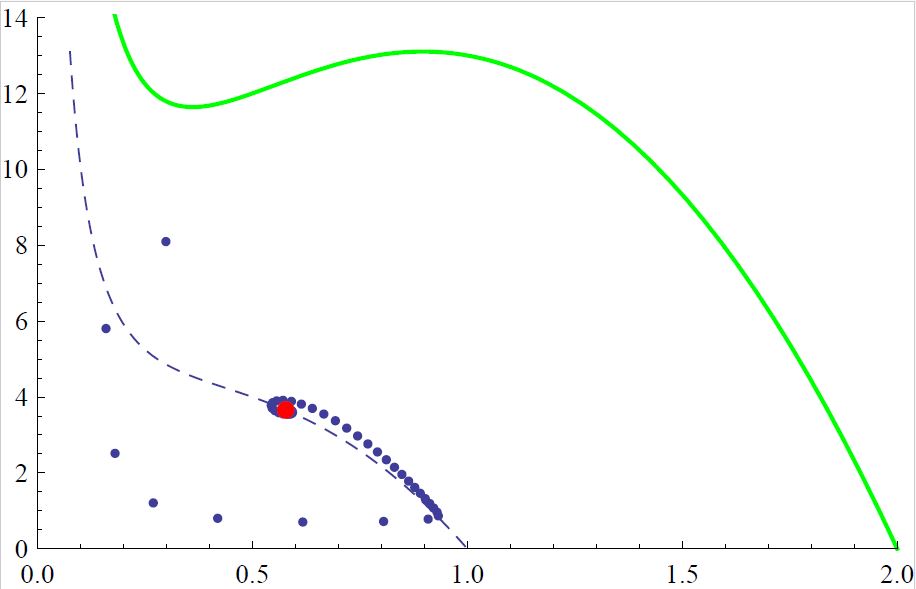}}
 \caption{In Figures (a) and (b), the trajectories are shown for system (\ref{h1}) (i.e., \( h = 1 \)), while in Figures (c) and (d), the trajectories are presented for system (\ref{h2}) (i.e., \( h = 2 \)).}
    \label{figh12}
\end{figure}

\section{numerical simulations}

\subsection{Neimark-Sacker bifurcation of the system (\ref{h1})} Consider the system (\ref{h1}) with parameters \( c = 1 \) and \( r = 0.5 \). Then the equation \( q(\overline{u}) = 1 \) has a solution
\[
\gamma_0=\frac{3+\sqrt{17}}{4}\approx 1.78>r(1+c);
\]
and the fixed point is

\[
\overline{E} \approx (0.3906, 0.8474).
\]

The corresponding multipliers are

\[
\lambda_1 \approx 0.8902 - 0.4554i, \quad \lambda_2 \approx 0.8902 + 0.4554i.
\]

The relevant coefficients are computed as

\[
L_{20} \approx 0.0174 + 0.0575i, \quad L_{11} \approx -0.1887 - 0.1654i,
\]
\[
L_{02} \approx -0.2738 - 0.0827i, \quad L_{21} \approx -0.0733 + 0.0158i.
\]

Additionally, we obtain

\[
\mathcal{L} \approx -0.2142 < 0.
\]

Thus, according to Theorem \ref{bifurcation}, an attracting invariant closed curve bifurcates from the fixed point for \( \gamma^* > 0 \), and the system (\ref{h1}) undergoes a Neimark–Sacker bifurcation. In Fig. \ref{diag}, the Neimark–Sacker bifurcation diagrams and the corresponding Maximum Lyapunov exponents for system (\ref{h1}) are presented.

In Figure \ref{fig1} (a) the fixed point remains attracting since \( q(\overline{u}) < 1 \). In Figures \ref{fig1} (b) and (c), an invariant closed curve is formed as \( q(\overline{u}) > 1 \), and moreover, this curve is attracting. In Figures \ref{fig1} (d), (e), and (f), we present trajectories for various values of \( \gamma \) with fixed parameters \( r = 0.5 \) and \( c = 1 \).

\subsection{Neimark-Sacker bifurcation of the system (\ref{h2})} Consider the system (\ref{h2}) with parameters \( c = 2 \) and \( r = 0.8 \). Note that the bifurcation parameter $\gamma_0$ is a positive solution of equation (\ref{gamma}).

Then the equation (\ref{gamma}) has a solution
\[
\gamma_0\approx 6.3
\]
and the positive fixed point is

\[
\overline{E} \approx (0.4125, 1.9085).
\]

The corresponding multipliers are

\[
\lambda_1 \approx 0.6491 - 0.7606i, \quad \lambda_2 \approx 0.6491 + 0.7606i.
\]

The relevant coefficients are computed as

\[
L_{20} \approx 0.0412 + 0.0987i, \quad L_{11} \approx -0.0190 - 0.0930i,
\]
\[
L_{02} \approx -0.1583 - 0.0076i, \quad L_{21} \approx 0.0108 - 0.0065i.
\]

Additionally, we obtain

\[
\mathcal{L} \approx -0.0141 < 0.
\]

Thus, according to Theorem \ref{bifurcation}, an attracting invariant closed curve bifurcates from the fixed point for \( \gamma^* > 0 \).

In Fig. \ref{diag2}, the Neimark–Sacker bifurcation diagrams and the corresponding Maximum Lyapunov exponents for system (\ref{h2}) are presented.

In Figure \ref{fig2} (a) the fixed point remains attracting since \( q(\overline{u}) < 1 \). In Figures \ref{fig2} (b) and (c), an invariant closed curve is formed as \( q(\overline{u}) > 1 \), and moreover, this curve is attracting. In Figures \ref{fig2} (d), (e), and (f), we present trajectories for various values of \( \gamma \) with fixed parameters \( r = 0.8 \) and \( c = 2 \).

\section{Conclusion}

The dynamics of plankton proliferation are inherently complex, and constructing comprehensive models that accurately capture these dynamics remains a significant challenge in ecological modeling. Extensive research has been devoted to analyzing both continuous-time and discrete-time systems, focusing on key dynamical properties such as the stability of equilibria, the existence and uniqueness of limit cycles, and the emergence of bifurcations.

In the context of continuous-time models similar to the one studied here, \cite{Chen2} established the global attractivity of the positive equilibrium and demonstrated the existence of a limit cycle. Similarly, \cite{Peng} investigated the conditions under which bifurcations occur at the positive equilibrium, contributing to the understanding of transitions in population dynamics.

By contrast, discrete-time models have received comparatively less attention in the literature, despite their relevance for seasonal environments or data-driven ecological systems. For instance, in \cite{SH}, the authors analyzed a discrete-time model incorporating a Holling Type II functional response \( f(P) = \frac{P}{1 + cP} \) and a linear toxin release function \( g(P) = P \). They examined the existence and nature of positive fixed points and demonstrated the occurrence of a Neimark--Sacker bifurcation. In a related study, Elettreby~\cite{Ele} considered a discrete Lotka--Volterra predator-prey model with a combination of Holling Type I and Type III responses, revealing the presence of both Neimark--Sacker and flip bifurcations at the interior equilibrium.

These prior works underscore the rich dynamical behavior exhibited by plankton systems and highlight the need for continued exploration of discrete-time models, particularly those incorporating nonlinear interactions and toxicity effects.

In this study, we analyzed the dynamics of a discrete-time phytoplankton-zooplankton model in which both the predator's functional response and the distribution of toxin substances are described by a common Holling-type expression. This unified framework enabled a systematic comparison between Holling Type II and Type III interactions and offered deeper insights into their ecological effects.

We established that the system undergoes a Neimark--Sacker bifurcation at a unique positive fixed point \( \overline{E} = (\overline{u}, \overline{v}) \) when the predation parameter \( \gamma \) crosses a critical value \( \gamma_0 \) (Theorem~\ref{bifurcation}). This bifurcation leads to the emergence of an attracting or repelling invariant closed curve depending on the sign of the first Lyapunov coefficient \( \mathcal{L} \). From an ecological perspective, this behavior corresponds to the transition from stable population equilibria to persistent oscillations or quasi-periodic dynamics, a phenomenon frequently observed in natural plankton communities.

Moreover, it was shown that when \( \gamma \leq r(1+c) \), all trajectories originating within biologically meaningful invariant sets converge to the boundary fixed point \( E_1 = (1,0) \) (Propositions~\ref{prop2} and~\ref{prop3}). This result highlights the possibility of zooplankton extinction under certain environmental or toxicological stress conditions, with important implications for understanding ecosystem resilience and collapse scenarios.

To address the instability and chaotic behavior identified near the fixed point, we proposed a feedback control strategy by introducing a corrective term to the system dynamics. By suitably selecting feedback gain parameters, we proved that the controlled system becomes asymptotically stable (Lemma~\ref{control}). This theoretical development suggests that in real-world settings, management strategies such as regulating nutrient supply, mitigating pollutant inputs, or implementing dynamic harvesting policies could help stabilize plankton populations and maintain ecosystem balance.

While the present model yields valuable theoretical insights into the dynamics of plankton populations, its applicability to real marine ecosystems is constrained by several simplifying assumptions. In particular, the omission of spatial structure and the assumption of a constant toxin release rate may overlook critical ecological processes such as localized interactions, spatial dispersion, and adaptive behavioral responses. These simplifications highlight important directions for future research. We intend to extend our analysis by incorporating spatially explicit dynamics, modeling variable toxin release mechanisms, and examining a wider array of ecological scenarios. In addition, we aim to systematically explore all nine biologically meaningful cases identified in \cite{Chatt}, thereby enriching the model’s ecological relevance and generality.

Building upon this foundation, future studies will also investigate a broader class of functional responses and introduce additional trophic groups, such as mixoplankton and bacteria, which play vital roles in marine ecosystems. Furthermore, we will analyze the influence of environmental noise and spatial heterogeneity, both of which are essential for capturing the variability and complexity observed in natural settings. A key objective will be to establish rigorous analytical conditions for the onset of chaotic attractors and other complex dynamics. Through these extensions, we seek to develop a more comprehensive and predictive modeling framework to support the understanding, management, and conservation of ocean ecosystems under changing environmental conditions.

\section*{Declerations}

 \textbf{Ethical approval} The author confirms that this study does not involve human participants or animals, and no ethical approval was required.

\textbf{Funding} No funding was received for conducting this study.

\textbf{Conflict of interest} The author declares that there is no Conflict of interest regarding the publication of
this paper.

\textbf{Data availability} Not applicable.

\begin{figure}[h!]
    \centering
    \subfigure[]{\includegraphics[width=0.56\textwidth]{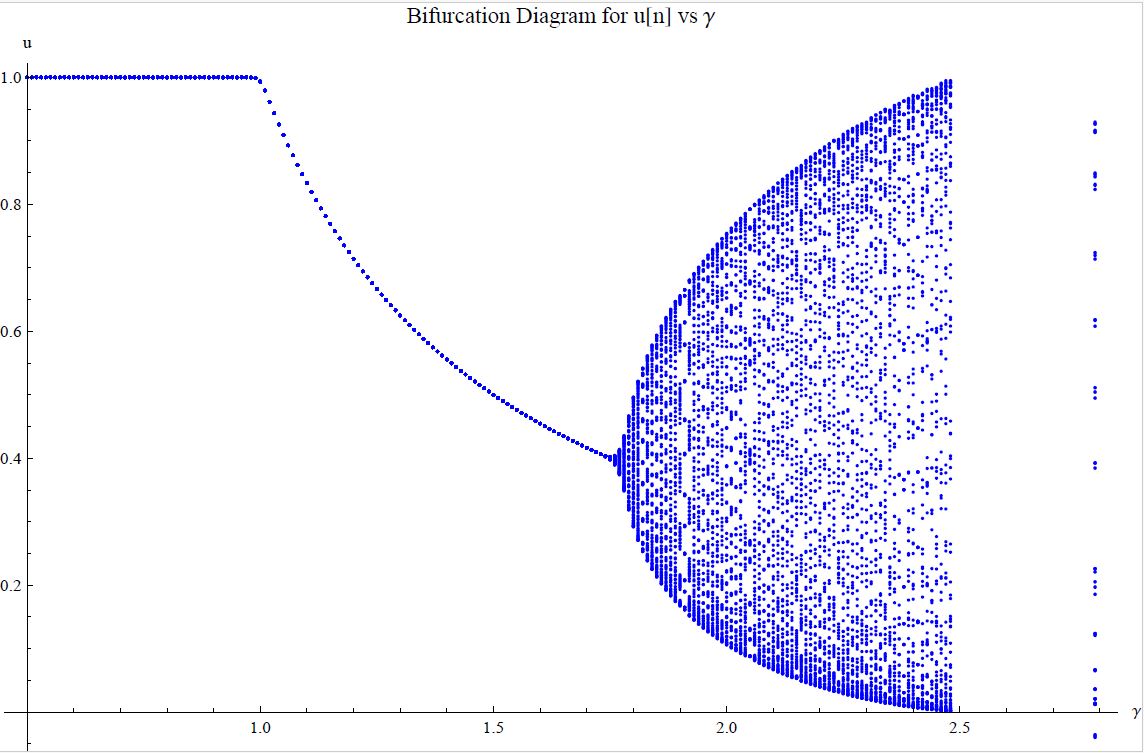}} \hspace{0.3in}
    \subfigure[]{\includegraphics[width=0.56\textwidth]{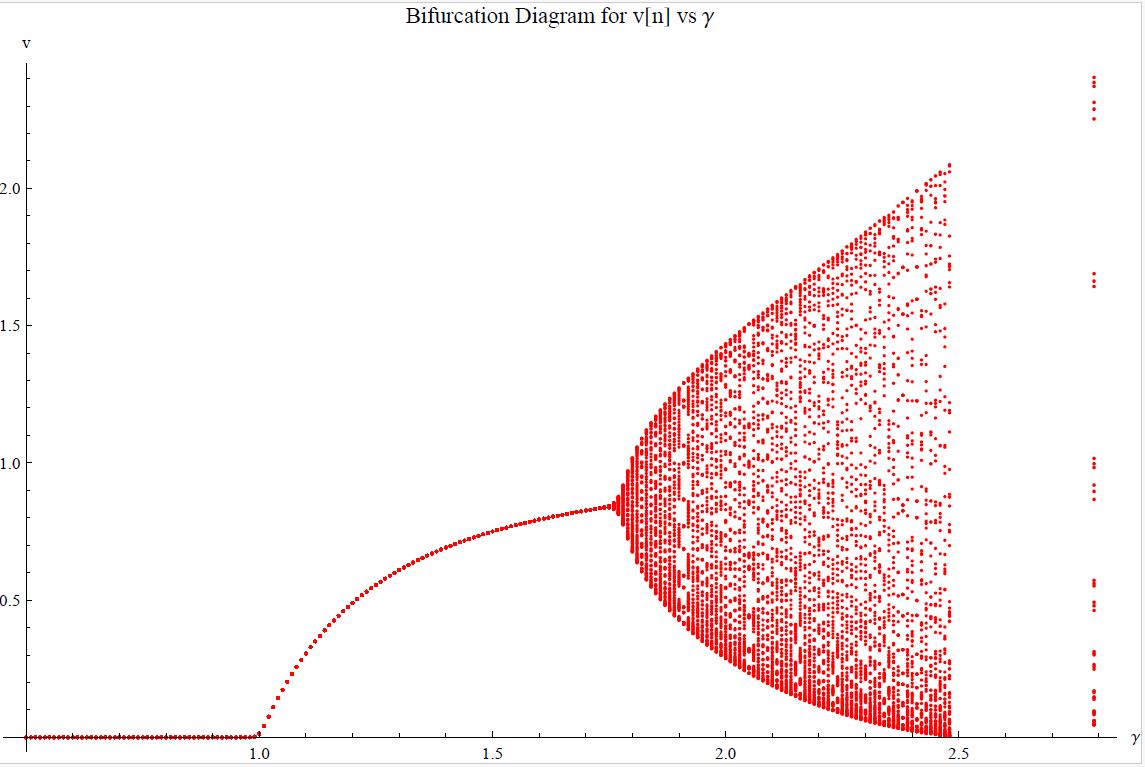}}
    \subfigure[]{\includegraphics[width=0.56\textwidth]{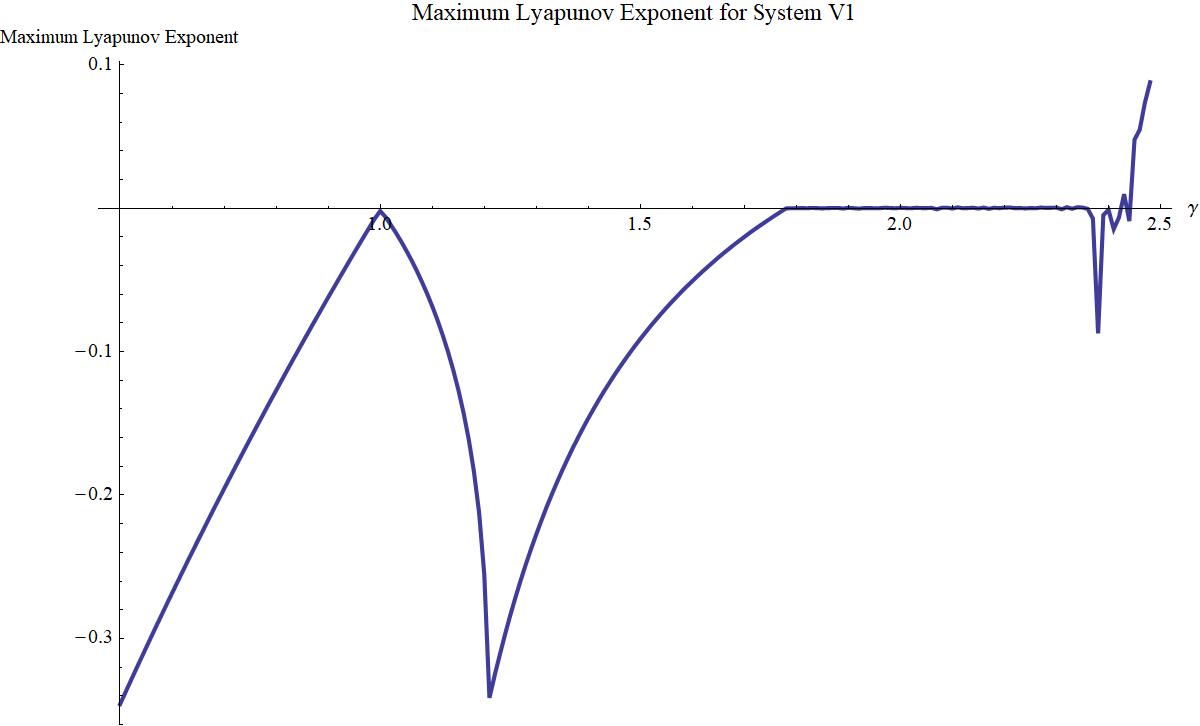}}
    \caption{\tiny Bifurcation diagrams for the system \ref{h1} with the parameters $r = 0.5, c =
1,$ and initial values $u^0 = 0.35, v^0 = 0.6$ when the bifurcation parameter $\gamma$ varying on the interval $0.5\leq\gamma\leq3$. In (c), the Maximum Lyapunov exponent corresponding to (a) and (b) are presented. The maximum Lyapunov exponent indicates that chaotic dynamics are observed when approximately \( 1.78 < \gamma < 2.357 \) and \( 2.443 < \gamma < 2.482 \).}
    \label{diag}
\end{figure}

\begin{figure}[h!]
    \centering
    \subfigure[\tiny$\gamma=1.775, \overline{u}\approx0.3921, q(\overline{u})\approx0.9996, (0.35, 0.6).$]{\includegraphics[width=0.45\textwidth]{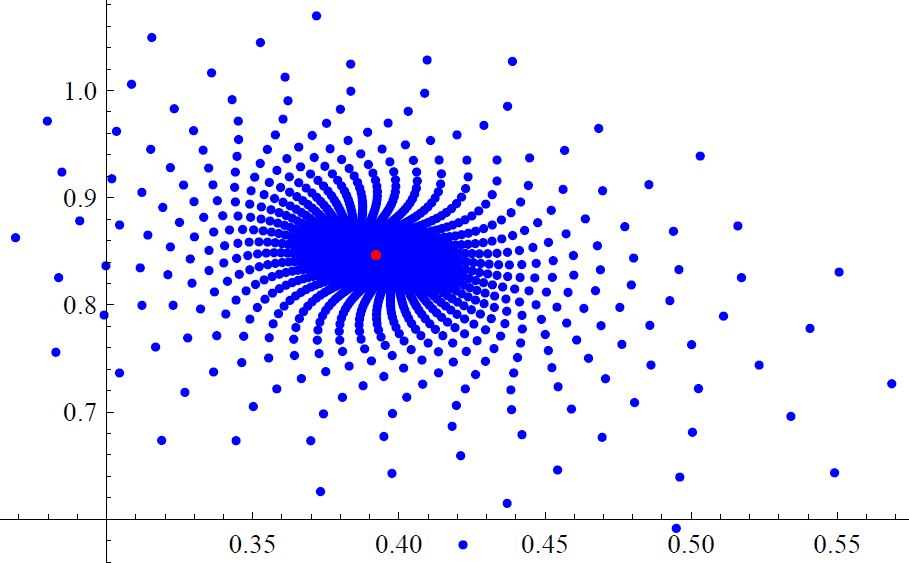}} \hspace{0.3in}
    \subfigure[\tiny$\gamma=1.79, \overline{u}\approx0.3875, q(\overline{u})\approx1.00414, (0.35, 0.7).$]{\includegraphics[width=0.45\textwidth]{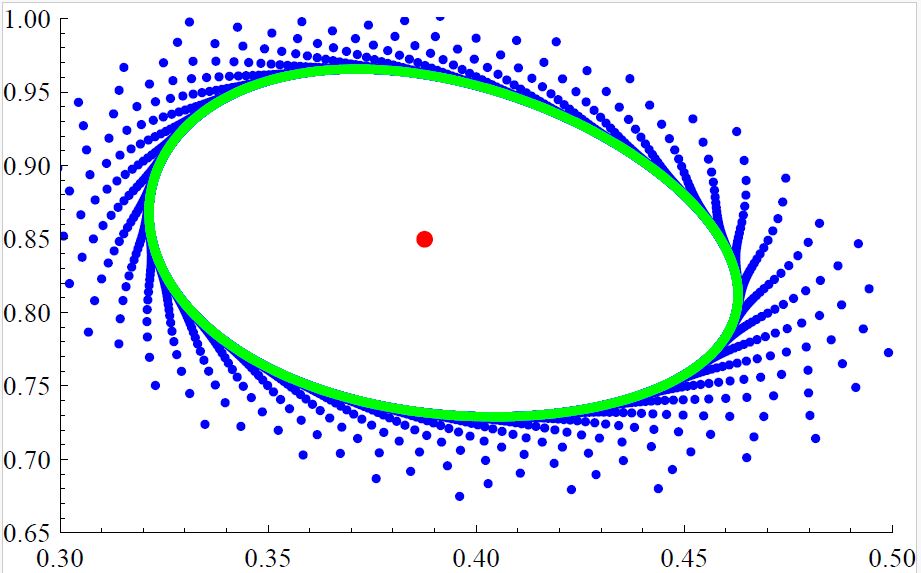}}
    \subfigure[\tiny$\gamma=1.79, \overline{u}\approx0.3875, q(\overline{u})\approx1.00414, (0.4, 0.8).$]{\includegraphics[width=0.45\textwidth]{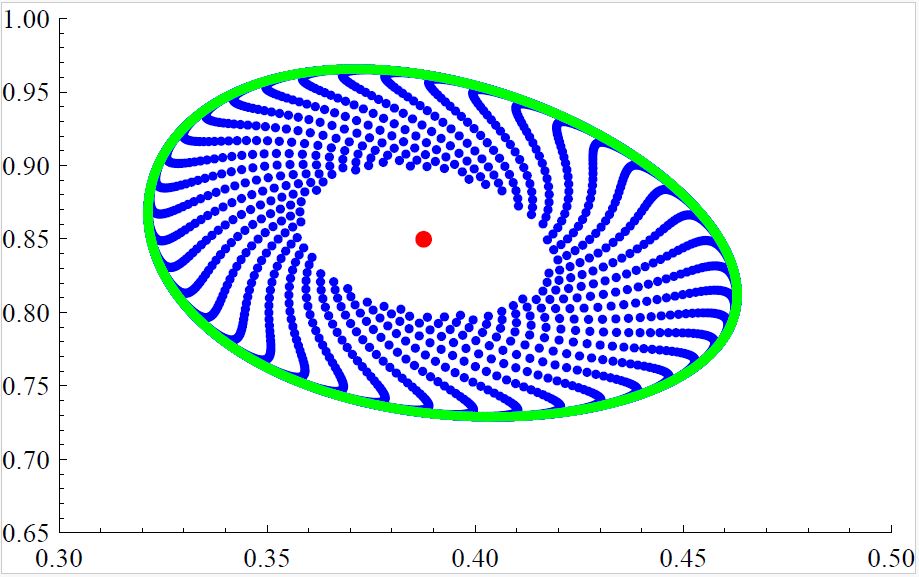}} \hspace{0.3in}
    \subfigure[\tiny$\gamma=2.2, (u^0, v^0)=(0.33, 0.96).$]{\includegraphics[width=0.45\textwidth]{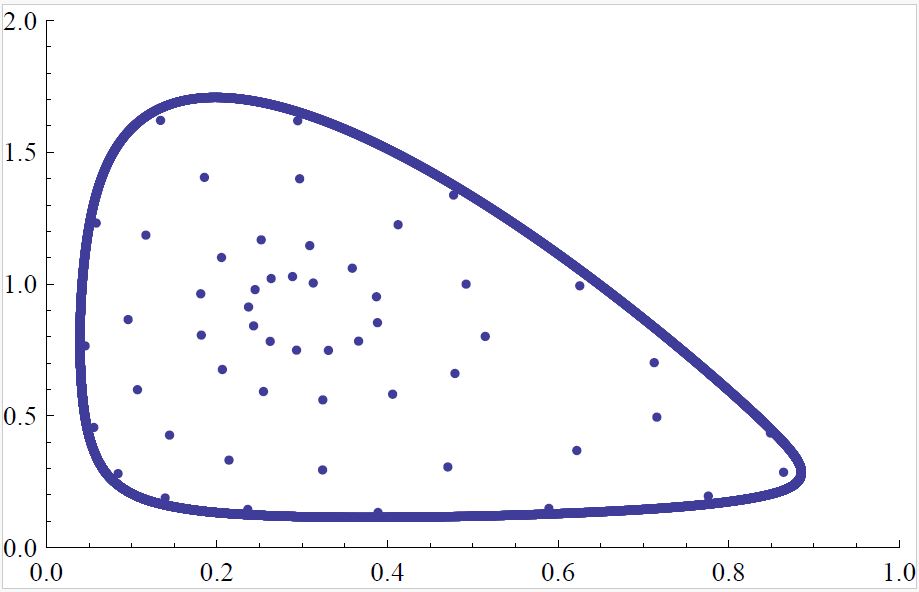}} \hspace{0.3in}
    \subfigure[\tiny$\gamma=2.43, (u^0, v^0)=(0.33, 0.96).$]{\includegraphics[width=0.45\textwidth]{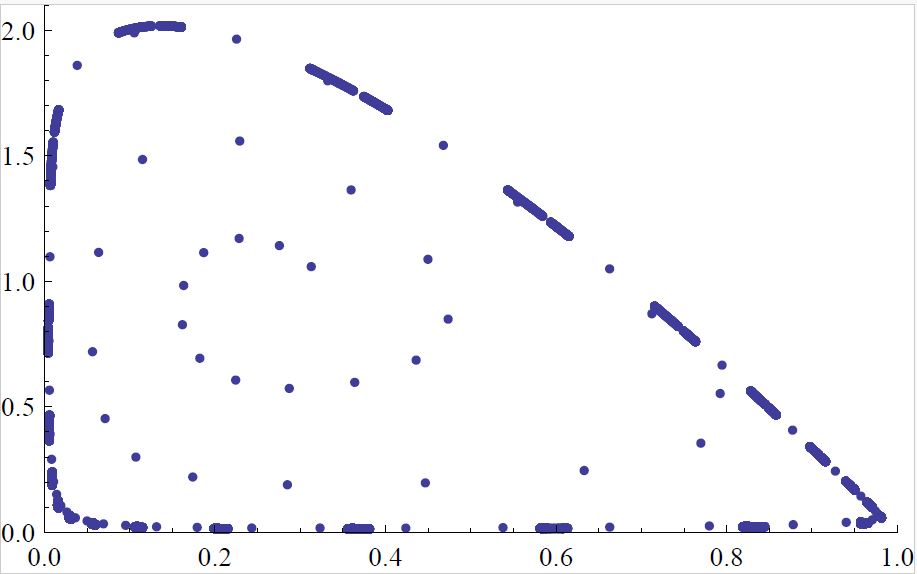}} \hspace{0.3in}
    \subfigure[\tiny$\gamma=2.48, (u^0, v^0)=(0.33, 0.96).$]{\includegraphics[width=0.45\textwidth]{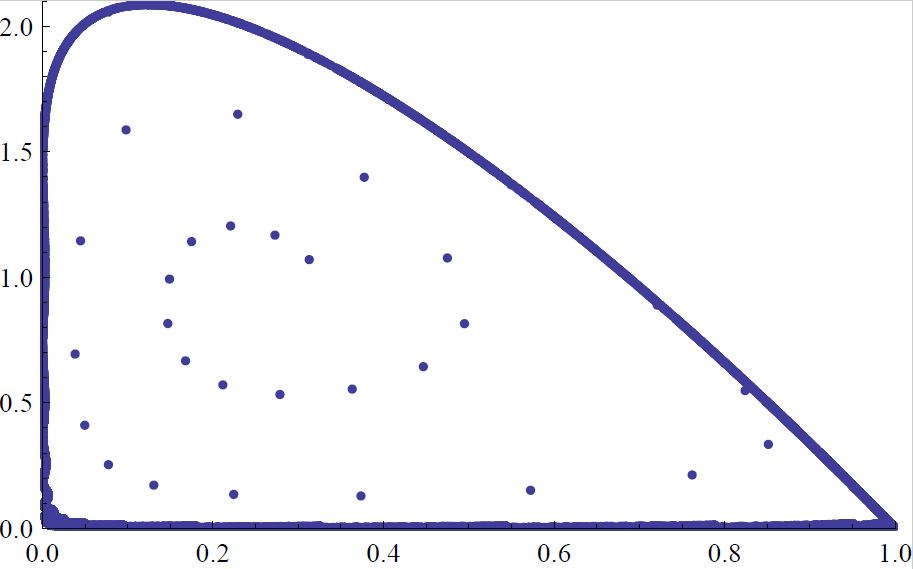}}
    \caption{\tiny Phase portraits for system (\ref{h1}) with parameters \( c = 1 \), \( r = 0.5 \), and \( n = 10,000 \). The red point represents the fixed point \( \overline{E} \), while the green curve denotes an attracting invariant closed curve.
In (a), \( q(\overline{u}) < 1 \), so the fixed point remains attracting. In (b), the initial point is chosen outside the invariant closed curve, whereas in (c), it is taken from inside. In (d), (e), and (f), we present trajectories for various values of \( \gamma \).}
    \label{fig1}
\end{figure}

\begin{figure}[h!]
    \centering
    \subfigure[]{\includegraphics[width=0.56\textwidth]{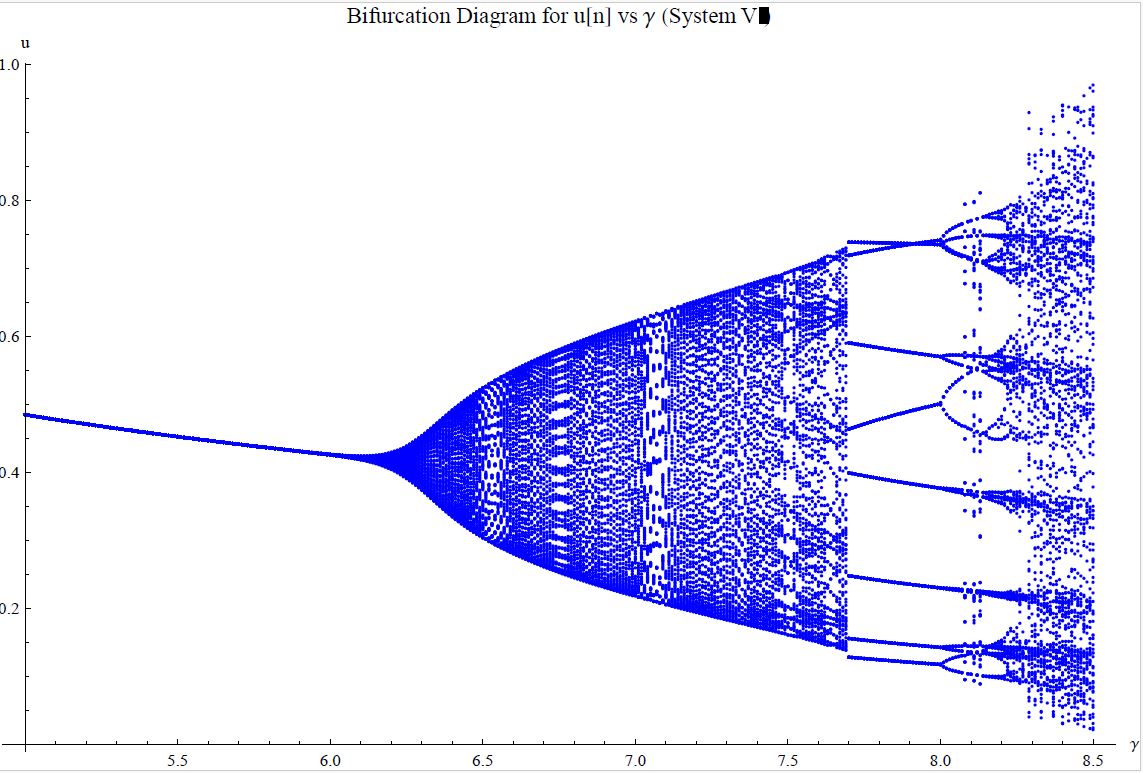}} \hspace{0.3in}
    \subfigure[]{\includegraphics[width=0.56\textwidth]{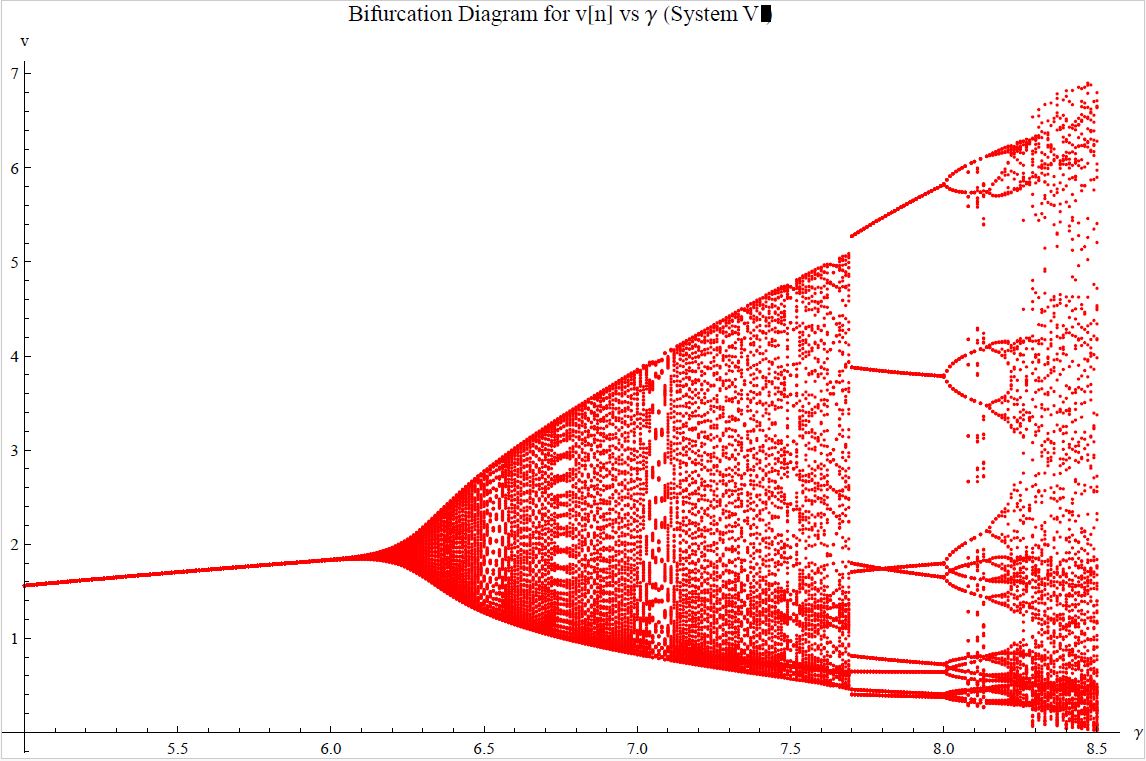}}\hspace{0.3in}
       \subfigure[]{\includegraphics[width=0.6\textwidth]{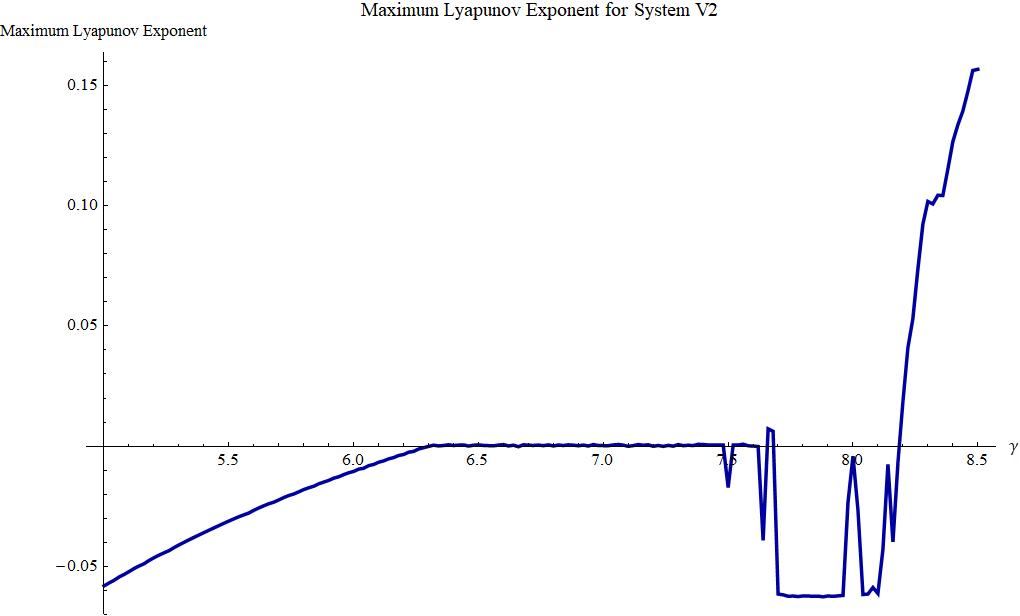}}
    \caption{\tiny Bifurcation diagrams for the system \ref{h2} with the parameters $r = 0.8, c =
2,$ and initial values $u^0 = 0.4, v^0 = 0.8$ when the bifurcation parameter $\gamma$ varying on the interval $5\leq\gamma\leq8.5$. In (c), the Maximum Lyapunov exponent corresponding to (a) and (b) are presented. The maximum Lyapunov exponent indicates that chaotic dynamics are observed when approximately \( 6.3 < \gamma < 7.478 \),  \ \ \( 7.522 < \gamma < 7.622 \), \ \ \( 7.658 < \gamma < 7.682 \) and \( 8.187 < \gamma < 8.5 \).}
    \label{diag2}
\end{figure}

\begin{figure}[h!]
    \centering
    \subfigure[\tiny$\gamma=6.2, \overline{u}\approx0.4170, q(\overline{u})\approx0.9929, (0.4, 0.8).$]{\includegraphics[width=0.45\textwidth]{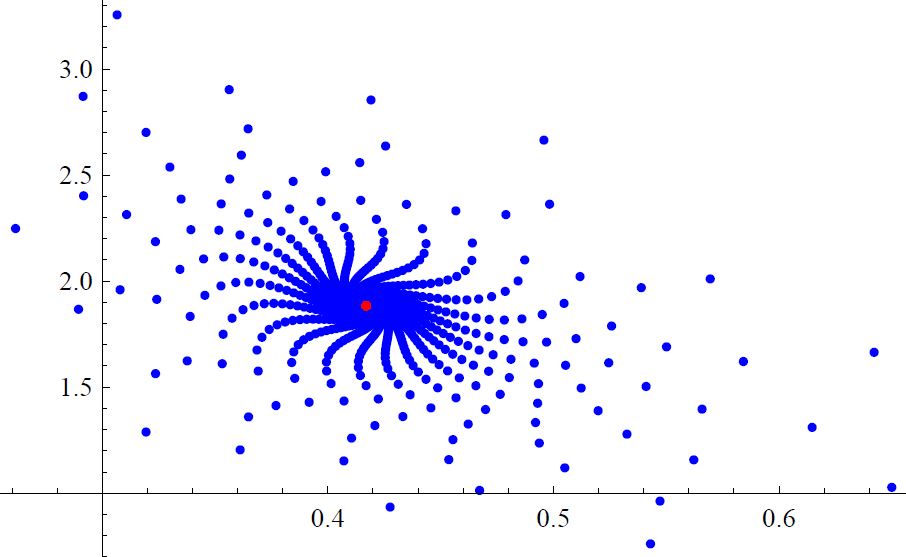}} \hspace{0.3in}
    \subfigure[\tiny$\gamma=6.4, \overline{u}\approx0.4082, q(\overline{u})\approx1.0059, (0.4, 0.8).$]{\includegraphics[width=0.45\textwidth]{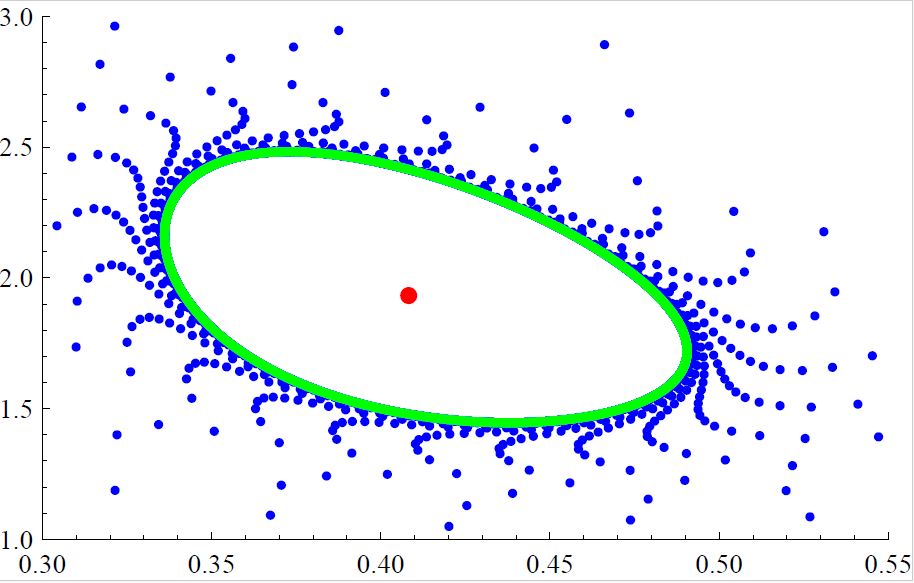}}
    \subfigure[\tiny$\gamma=6.4, \overline{u}\approx0.4082, q(\overline{u})\approx1.0059, (0.4, 2.1).$]{\includegraphics[width=0.45\textwidth]{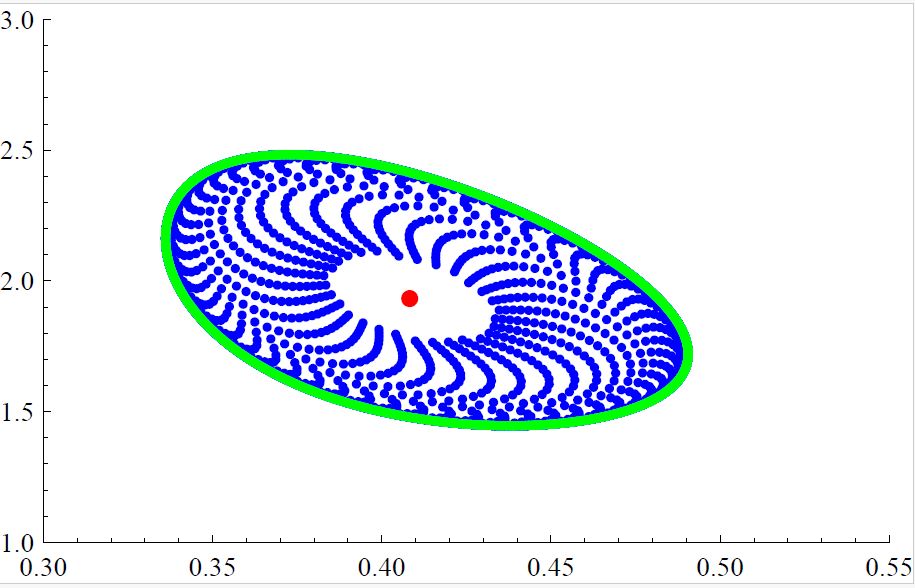}} \hspace{0.3in}
    \subfigure[\tiny$\gamma=7.6, (u^0,v^0)=(0.35, 3).$]{\includegraphics[width=0.45\textwidth]{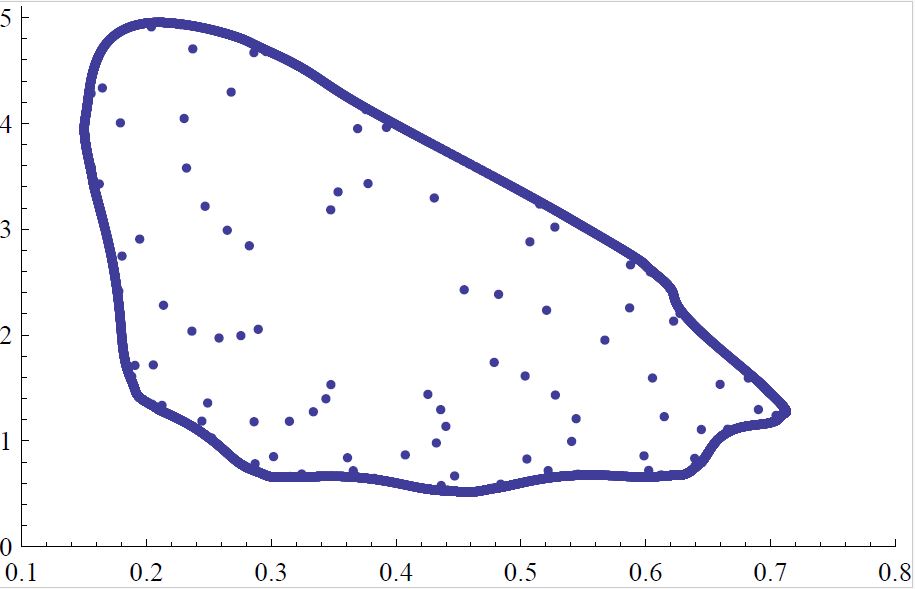}} \hspace{0.3in}\hspace{0.3in}
    \subfigure[\tiny$\gamma=8.2, (u^0,v^0)=(0.35, 3).$]{\includegraphics[width=0.43\textwidth]{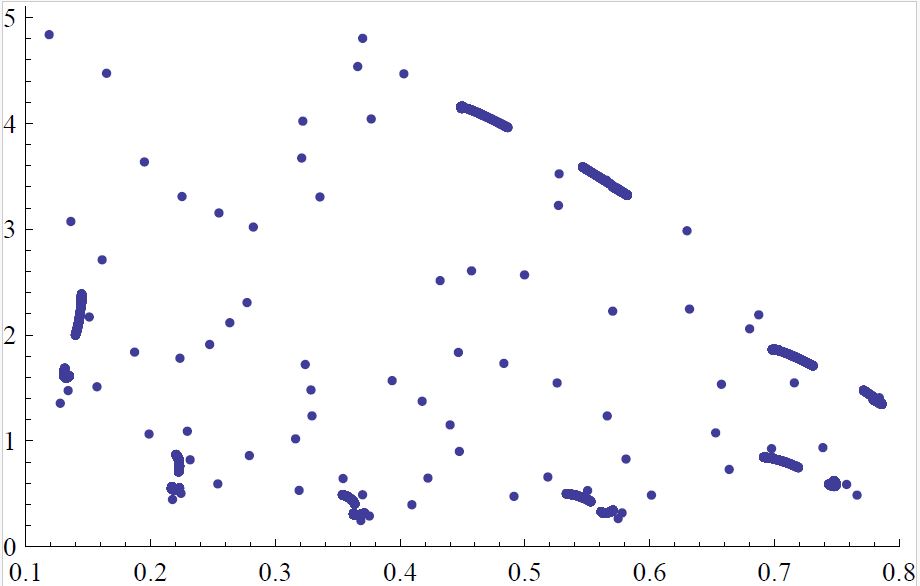}} \hspace{0.3in}\hspace{0.3in}
    \subfigure[\tiny$\gamma=8.7, (u^0,v^0)=(0.35, 3).$]{\includegraphics[width=0.43\textwidth]{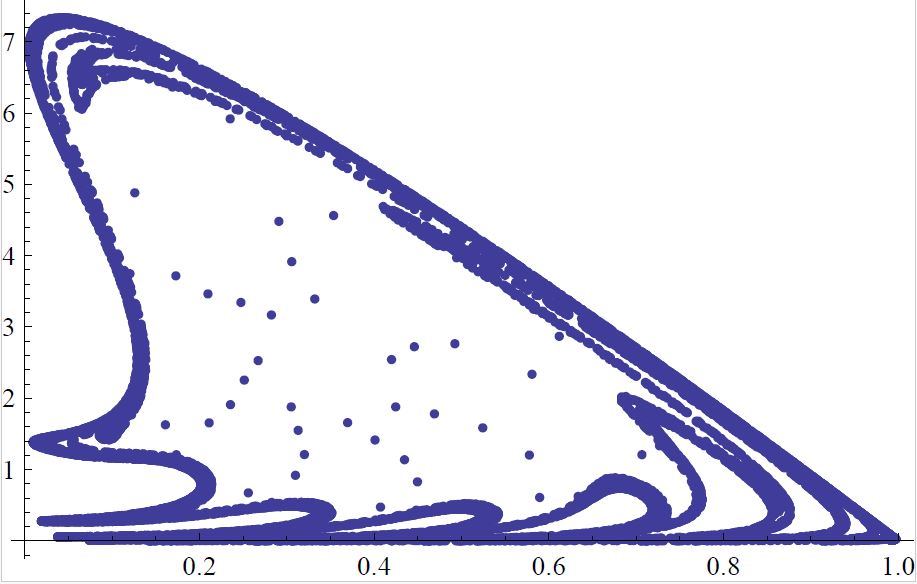}}
    \caption{\tiny Phase portraits for the system (\ref{h2}) with parameters $c=2$, $r=0.8$, and $n=10,000$. The red point represents the fixed point $\overline{E}$, while the green curve is an attracting invariant closed curve. In (a), $q(\overline{u}) < 1$, so the fixed point remains attracting. In (b), the initial point is chosen outside the invariant closed curve, whereas in (c), it is taken from inside. In (d), (e), and (f), we present trajectories for various values of \( \gamma \).}
    \label{fig2}
\end{figure}

\end{document}